

\documentclass{amsart}

\usepackage{amsmath}
\usepackage{amssymb}
\usepackage{amsthm}
\usepackage{esint}
\usepackage[inline]{enumitem}


\theoremstyle{plain}
\newtheorem{theorem}{Theorem}[section]
\newtheorem{corollary}[theorem]{Corollary}
\newtheorem{lemma}[theorem]{Lemma}
\newtheorem{proposition}[theorem]{Proposition}

\theoremstyle{definition}
\newtheorem{definition}[theorem]{Definition}
\newtheorem{remark}[theorem]{Remark}

\theoremstyle{remark}

\numberwithin{theorem}{section}
\numberwithin{equation}{section}


\newcommand{\N}{\mathbb{N}}

\newcommand{\Z}{\mathbb{Z}}

\newcommand{\R}{\mathbb{R}}


\newcommand{\dist}{\mathrm{dist}}
\newcommand{\diam}{\mathrm{diam}}

\newcommand{\cl}{\overline}
\newcommand{\loc}{\mathrm{loc}}

\DeclareMathOperator{\divergence}{div}

\newcommand{\laplacian}{\Delta}


\DeclareMathOperator*{\essinf}{ess\,inf}

\DeclareMathOperator*{\spt}{supp}
\newcommand{\capacity}{\mathrm{cap}}


\newcommand{\trinorm}[1]
{{
    \left\vert\kern-0.20ex\left\vert\kern-0.20ex\left\vert
    #1 
    \right\vert\kern-0.20ex\right\vert\kern-0.20ex\right\vert
}}

\newcommand{\M}{\mathcal{M}}

\newcommand{\A}{\mathcal{A}}
\newcommand{\W}{{\bf{W}}}
\newcommand{\w}{\mathcal{W}}



\begin{document}




\title[Trace inequalities]{Trace inequalities of the Sobolev type and nonlinear Dirichlet problems}
\author{Takanobu Hara}
\email{takanobu.hara.math@gmail.com}
\address{Department of Mathematics, Hokkaido University, Kita 8 Nishi 10  Sapporo, Hokkaido 060-0810, Japan}
\date{\today}
\subjclass[2010]{35J92; 35J25; 31C15; 31C45} 
\keywords{Quasilinear elliptic equation, $p$-Laplacian, Measure data, Trace inequality, Singular elliptic equations}



\begin{abstract}
We discuss the solvability of nonlinear Dirichlet problems of the type
$- \laplacian_{p, w} u = \sigma$ in $\Omega$;
$u = 0$ on $\partial \Omega$,
where $\Omega$ is a bounded domain in $\R^{n}$,
$\laplacian_{p, w}$ is a weighted $(p, w)$-Laplacian and
$\sigma$ is a nonnegative locally finite Radon measure on $\Omega$.
We do not assume the finiteness of $\sigma(\Omega)$.
We revisit this problem from a potential theoretic perspective
and provide criteria for the existence of solutions
by $L^{p}(w)$-$L^{q}(\sigma)$ trace inequalities or capacitary conditions.
Additionally, we apply the method to the singular elliptic problem
$- \laplacian_{p, w} u = \sigma u^{- \gamma}$ in $\Omega$;
$u = 0$ on $\partial \Omega$
and derive connection with the trace inequalities.
\end{abstract}

\thanks{This work was supported by JSPS KAKENHI Grant Number JP18J00965 and JP17H01092.}


\maketitle


\section{Introduction and main results}\label{sec:introduction}

Let $\Omega$ be a bounded domain in $\R^{n}$, and let $1 < p < \infty$.
We consider the existence problem of positive solutions to quasilinear elliptic equations of the type
\begin{equation}\label{eqn:p-laplace}
\begin{cases}
\displaystyle
- \divergence \A(x, \nabla u) = \sigma
& \text{in} \ \Omega,
\\
u = 0
& \text{on} \ \partial \Omega,
\end{cases}
\end{equation}
where $- \divergence \A(x, \nabla \cdot)$ is a weighted $(p, w)$-Laplacian type elliptic operator,
$w$ is a $p$-admissible weight on $\R^{n}$ (see Sect. \ref{sec:preliminaries} for details)
and $\sigma$ is a nonnegative (locally finite) Radon measure on $\Omega$.
We do not assume the global finiteness of $\sigma$.
For the standard theory of quasilinear Dirichlet problems with finite signed measure data,
we refer to \cite{MR1025884, MR1205885, MR1354907, MR1386213, MR1409661, MR1760541, MR3676369}.
See also \cite{MR1205885, MR1264000, MR1655522, MR1955596, MR2859927}
for the definitions of local solutions ($\A$-superharmonic functions or locally renormalized solutions)
and their properties.

Most studies of the quasilinear measure data problem \eqref{eqn:p-laplace} assume the finiteness of $\sigma$
to ensure the existence of solutions satisfying the Dirichlet boundary condition.
However, the global finiteness of $\sigma$ is not a necessary condition,
and this existence problem has been stated as an open problem in a paper by Bidaut-V\'{e}ron (see, \cite[Problem 2]{MR1990293}).
If we recall classical potential theory (see, e.g., \cite{MR0350027, MR1801253}),
the solution $u$ to the Poisson equation $- \laplacian u = \sigma$ in $\Omega$; $u = 0$ on $\partial \Omega$ is given by the Green potential
\[
u(x) = \int_{\Omega} G_{\Omega}(x, y) \, d \sigma(y).
\]
Since the Green function $G_{\Omega}(\cdot, \cdot)$ vanishes on the boundary of $\Omega$,
the integral may be finite even if $\sigma$ is not finite.
In addition, the pointwise estimate of the Green function yields more concrete existence results
if the boundary of $\Omega$ is $C^{2}$;
$u \not \equiv + \infty$ if and only if $\int_{\Omega} \dist(x, \partial \Omega) \, d  \sigma(x) < \infty$ (see, e.g., \cite{MR3156649}).
However, finding the estimate is another problem,
and furthermore, the method is completely useless for nonlinear equations.
Note that the use of the Green function is one of the methods to solve the problem
and that there are rich examples of solutions even for nonlinear equations with infinite measure data.

One natural desire is to apply the theory of finite measure data problems to infinite measures,
but it is actually not enough, because the above examples do not necessarily satisfy the Dirichlet boundary condition in the traditional sense.
Therefore, we should adopt a two-step strategy;
\begin{enumerate*}[label=(\roman*)]
\item
find a local solution that satisfies the equation in a generalized sense, and 
\item
confirm the boundary condition.
\end{enumerate*}

Variational methods (more generally, the theory of monotone operators)
and comparison principles are effective for good measures.
Hence, we find the solution $u$ to \eqref{eqn:p-laplace} by 
\[
u(x)
=
\sup \left\{ v(x) \in H_{0}^{1, p}(\Omega; w) \cap \mathcal{SH}(\Omega) \colon 0 \le - \divergence \A(x, \nabla v) \le \sigma \right\},
\]
where $H_{0}^{1, p}(\Omega; w)$ is a weighted Sobolev space
and $\mathcal{SH}(\Omega)$ is the set of all $\A$-superharmonic functions in $\Omega$.
Perron's method for $\A$-superharmonic functions have been well studied
(see, e.g., \cite{MR863533, MR2305115}),
and the relation between them and their Riesz measures is also known (\cite{MR1205885, MR1890997}).
Furthermore, if $\sigma$ is absolutely continuous with respect to the $(p, w)$-capacity,
then the above set contains sufficiently many $\A$-superharmonic functions.
As a result, $u$ is a local solution if it is not identically infinite.
In \cite{MR3567503}, Cao and Verbitsky proved a comparison principle leading to the minimality of $u$
(see Theorem \ref{thm:comparison_principle} below for a refinement of it).
From this, $u$ can be considered to satisfy the boundary condition in a very weak sense.
Therefore, the remaining problem is to present a sufficient condition for $u$ to not be identically infinite.

In previous work \cite{MR4309811}, the author proved a decomposition theorem of measures
and provided an existence theorem for equations of the type \eqref{eqn:p-laplace}.
The sufficient condition was given by the $L^{p}(w)$-$L^{q}(\sigma)$ trace inequality of Sobolev type
\begin{equation}\label{eqn:trace_ineq}
\| f \|_{L^{q}(\Omega; \sigma)} \le C_{1} \| \nabla f \|_{L^{p}(\Omega; w)}, \quad \forall f \in C_{c}^{\infty}(\Omega),
\end{equation}
where
\[
0 < q < p.
\]
In general, the measure $\sigma$ satisfying \eqref{eqn:trace_ineq} is not necessarily finite.
Note that if \eqref{eqn:trace_ineq} holds, then $\sigma$ must be absolutely continuous with respect to the $(p, w)$-capacity.
Our first existence theorem is as follows.

\begin{theorem}\label{thm:main_theorem}
Assume that $\A$ satisfies \eqref{eqn:coercive}-\eqref{eqn:homogenity}.
Let $\sigma$ be a nonnegative Radon measure on $\Omega$, and let $0 < q < p$.
Assume that \eqref{eqn:trace_ineq} holds, and let $C_{1}$ be the best constant.
Then, there exists a minimal nonnegative $\mathcal{A}$-superharmonic solution $u$ to \eqref{eqn:p-laplace}
satisfying
\begin{equation}\label{eqn:energy_bound}
\frac{ (\alpha / \beta)^{p} }{\alpha} C_{1}^{q}
\le
\| \nabla u^{\frac{p - 1}{p - q}} \|_{L^{p}(\Omega; w)}^{p - q}
\le
\frac{1}{q} \left( \frac{p - 1}{p - q} \right)^{p - 1} \frac{1}{\alpha} C_{1}^{q}
\end{equation}
and
\begin{equation}\label{eqn:COV-bound}
\frac{ (\alpha / \beta) }{ \alpha^{ \frac{1}{p} } }
C_{1}
\le
\left( \int_{\Omega} u^{ \frac{q(p - 1)}{p - q} }  \, d \sigma \right)^{\frac{p - q}{qp}}
\le
\frac{1}{q^{ \frac{1}{p} }}
\left( \frac{p - 1}{p - q} \right)^{\frac{p - 1}{p}}
\frac{1}{ \alpha^{ \frac{1}{p} } }
C_{1}.
\end{equation}
In particular, $u$ satisfies the Dirichlet boundary condition in the sense that
\begin{equation}\label{eqn:BC@main_theorem}
u^{\frac{p - 1}{p - q}} \in H_{0}^{1, p}(\Omega; w).
\end{equation}
Conversely, if there exists a nonnegative $\mathcal{A}$-superharmonic solution $u$ to \eqref{eqn:p-laplace}
satisfying \eqref{eqn:BC@main_theorem}, then \eqref{eqn:trace_ineq} holds.
\end{theorem}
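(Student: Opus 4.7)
The plan is to construct the minimal solution as the monotone limit of solutions to truncated problems with bounded measure data, using the trace inequality together with the distinguished test function $v := u^{(p - 1)/(p - q)}$ both to derive the energy estimate and to identify the boundary condition; the converse is obtained by essentially running the same chain of inequalities backwards through a Picone-type identity.

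For existence I would approximate $\sigma$ from below by an increasing sequence $\sigma_{k}$ of bounded, compactly supported Radon measures, each satisfying \eqref{eqn:trace_ineq} with constant at most $C_{1}$ and in particular absolutely continuous with respect to the $(p, w)$-capacity. For each $k$, the finite-measure theory supplies a nonnegative $\A$-superharmonic solution $u_{k} \in H_{0}^{1, p}(\Omega; w)$ with Riesz measure $\sigma_{k}$, and the comparison principle forces $u_{k}$ to increase in $k$. Writing $v_{k} := u_{k}^{(p - 1)/(p - q)}$ I would use (a truncation of) $\phi = u_{k}^{q(p - 1)/(p - q)}$ as a test function in the equation for $u_{k}$. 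The algebraic identity
\[
p \left( \tfrac{p - 1}{p - q} - 1 \right) + 1 = \frac{q(p - 1)}{p - q}
\]
together with the coercivity bound $\A(x, \xi) \cdot \xi \ge \alpha w |\xi|^{p}$ yields
\[
\alpha q \left(\tfrac{p - q}{p - 1}\right)^{p - 1} \int_{\Omega} |\nabla v_{k}|^{p} w \, dx \;\le\; \int_{\Omega} v_{k}^{q} \, d\sigma,
\]
and applying \eqref{eqn:trace_ineq} to a smooth approximation of $v_{k}$ gives a bound on the left side matching the right-hand inequality in \eqref{eqn:energy_bound}, uniformly in $k$. The sequence $v_{k}$ is then bounded in $H_{0}^{1, p}(\Omega; w)$; passing to the weak limit and invoking the stability of $\A$-superharmonic functions under monotone limits (valid because $\sigma$ is absolutely continuous with respect to capacity) identifies $u := \lim_{k} u_{k}$ as a nonnegative $\A$-superharmonic solution to \eqref{eqn:p-laplace} satisfying \eqref{eqn:BC@main_theorem}, and minimality follows from Theorem~\ref{thm:comparison_principle} applied to the approximations.

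The lower bounds in \eqref{eqn:energy_bound} and \eqref{eqn:COV-bound} and the converse direction all rely on a Picone-type Hardy inequality of the form
\[
\int_{\Omega} \frac{f^{p}}{u^{p - 1}} \, d\sigma \;\le\; \left(\tfrac{\beta}{\alpha}\right)^{p - 1} \int_{\Omega} |\nabla f|^{p} \, w \, dx, \qquad f \in C_{c}^{\infty}(\Omega),
\]
obtained by formally testing $-\divergence \A(x, \nabla u) = \sigma$ against $f^{p} u^{-(p - 1)}$, expanding by the Leibniz rule, and using $|\A(x, \xi)| \le \beta w |\xi|^{p - 1}$ together with Young's inequality to absorb the cross term. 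Exploiting $v^{p - q} = u^{p - 1}$ and H\"older with exponents $p/q$ and $p/(p - q)$ on the factorization $f^{q} = (f^{p} u^{-(p - 1)})^{q/p} \cdot v^{q(p - q)/p}$ converts this Hardy inequality, together with the companion estimate $\int_{\Omega} v^{q} \, d\sigma \le \beta q ((p - q)/(p - 1))^{p - 1} \|\nabla v\|_{L^{p}(\Omega; w)}^{p}$ (obtained from the same test function computation but with the upper structural bound in place of coercivity), into a trace inequality for arbitrary $f \in C_{c}^{\infty}(\Omega)$ with explicit constant in terms of $\alpha, \beta, p, q$ and $\|\nabla v\|_{L^{p}(\Omega; w)}^{p - q}$. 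Taking the supremum over $f$ yields the lower bound on $\|\nabla v\|_{L^{p}(\Omega; w)}^{p - q}$ in \eqref{eqn:energy_bound}, and \eqref{eqn:COV-bound} follows by combining both sides of the two-sided comparison between $\int_{\Omega} v^{q} \, d\sigma$ and $\|\nabla v\|_{L^{p}(\Omega; w)}^{p}$ with \eqref{eqn:energy_bound}. The converse statement of the theorem is exactly this Picone--H\"older argument read as a standalone implication: any $\A$-superharmonic solution $u$ with \eqref{eqn:BC@main_theorem} supports the Hardy inequality and hence \eqref{eqn:trace_ineq}.

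The main obstacle I expect is justifying the test function $u_{k}^{q(p - 1)/(p - q)}$ in the weak formulation when $u_{k}$ is unbounded and only $\A$-superharmonic, so that $\nabla u_{k}$ lies only in natural truncated Sobolev spaces; this forces a truncation at level $M$ with $M \to \infty$ taken carefully relative to $k \to \infty$, and an identification of the limit of the nonlinear flux term. A subtler point is the Picone--Hardy identity in the singular regime where $u$ may vanish, which requires restricting admissible $f$ to avoid the set $\{u = 0\}$ or exploiting the fact that this set is $(p, w)$-capacity null for the solutions under consideration.
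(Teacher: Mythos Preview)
Your proposal matches the paper's approach: approximate $\sigma$ from below by nice measures (the paper takes $\sigma_k \in S_c(\Omega)$, so that $u_k := \w_{\A}\sigma_k$ is bounded and the test function $u_k^{q(p-1)/(p-q)}$ is unproblematic), derive the upper energy bound by testing and applying \eqref{eqn:trace_ineq}, and use a Picone inequality plus H\"older with exponents $p/q$, $p/(p-q)$ for the converse and the lower bounds. Two small points where the paper differs: the Picone constant is $\beta^{p}\alpha^{1-p}$ rather than $(\beta/\alpha)^{p-1}$; and for the lower bound the paper does \emph{not} invoke your companion estimate $\int_\Omega v^{q}\,d\sigma \le C\|\nabla v\|_{L^{p}(w)}^{p}$ (which would require testing the equation for the abstract $u$, exactly the obstacle you flag) but instead substitutes $f = v$ back into the Picone--H\"older bound written for $u_k$ and $\sigma_k$, uses the comparison $v \ge u_k^{(p-1)/(p-q)}$ from Theorem~\ref{thm:comparison_principle}, and obtains a self-closing inequality for $\int_\Omega u_k^{q(p-1)/(p-q)}\,d\sigma_k$ --- this simultaneously sidesteps the test-function issue and yields the sharp constant in \eqref{eqn:energy_bound}, which your route with the extra factor $q\bigl((p-q)/(p-1)\bigr)^{p-1}$ would not.
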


For $q = 1$, Theorem \ref{thm:main_theorem} is well-known,
because \eqref{eqn:trace_ineq} is necessary and sufficient for $\sigma \in (H_{0}^{1, p}(\Omega; w))^{*}$.
Claims that are equivalent to these were first given by Seesanea and Verbitsky \cite{MR3881877} for linear uniformly elliptic operators.
In \cite{MR4309811}, their arguments were applied to weighted $(p, w)$-Laplace operators,
and Theorem \ref{thm:main_theorem} was proved with a different formulation.
We give a direct proof of the existence theorem for \eqref{eqn:p-laplace}
and further discuss what occurs when condition \eqref{eqn:trace_ineq} is relaxed.
In addition, we present a more concrete existence result using Hardy-type inequalities (see Corollary \ref{cor:sufficient_condition}).

Condition \eqref{eqn:BC@main_theorem} has already appeared in the study of elliptic equations with singular nonlinearity
since the work by Boccardo and Orsina \cite{MR2592976}.
See, e.g., \cite{MR2718666, MR3450747, MR3462443, MR3478284, MR3712944} and the references therein.
The lower-order terms in these studies have a structure that increases near the boundary,
which affects the boundary behavior of solutions via the trace inequalities.
In Sect. \ref{sec:app}, we apply our framework to the singular elliptic problem
and examine the implication of condition \eqref{eqn:BC@main_theorem} (see Corollary \ref{cor:BO}).
Also, we prove the following criterion.  

\begin{theorem}\label{thm:singular_fe}
Let $1 < p < \infty$, and let $0 < q < 1$.
Suppose that $\sigma \in \M^{+}_{0}(\Omega) \setminus \{ 0 \}$.
Then there exists a unique finite energy weak solution $u \in H_{0}^{1, p}(\Omega; w)$ to
\begin{equation}\label{eqn:semilin}
\begin{cases}
- \divergence \A(x, \nabla u) = \sigma u^{q - 1}
& \text{in} \ \Omega,
\\
u > 0
& \text{in} \ \Omega,
\\
u = 0  & \text{on} \ \partial \Omega
\end{cases}
\end{equation}
if and only if \eqref{eqn:trace_ineq} holds.
\end{theorem}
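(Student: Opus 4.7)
For the \emph{if} direction I would apply the direct method of the calculus of variations to the functional
\begin{equation*}
J(v) := \frac{1}{p} \int_{\Omega} \A(x, \nabla v) \cdot \nabla v \, dx - \frac{1}{q} \int_{\Omega} v^{q} \, d \sigma
\end{equation*}
on the convex cone $K := \{ v \in H_{0}^{1, p}(\Omega; w) : v \ge 0 \}$. Coercivity of $J$ on $K$ follows from the ellipticity of $\A$ together with \eqref{eqn:trace_ineq}, since $J(v) \ge (\alpha / p) \| \nabla v \|_{L^{p}(\Omega; w)}^{p} - (C_{1}^{q}/q) \| \nabla v \|_{L^{p}(\Omega; w)}^{q}$ and $q < 1 < p$. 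The first term of $J$ is weakly lower semi-continuous by the convexity assumption on $\A$, while on $K$ the map $v \mapsto \int v^{q} \, d\sigma$ is concave in $v$ and strongly continuous on $H_{0}^{1, p}(\Omega; w)$ (via \eqref{eqn:trace_ineq} and the elementary bound $|a^{q} - b^{q}| \le |a - b|^{q}$ valid for $a, b \ge 0$ and $q \in (0, 1)$), hence weakly upper semi-continuous by Mazur's lemma. A minimizer $u \in K$ therefore exists, and evaluating at $t u_{0}$ for $u_{0} \in C_{c}^{\infty}(\Omega)$ positive and $t > 0$ small gives $J(u) \le J(t u_{0}) < 0$, so $u \not\equiv 0$. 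The Euler--Lagrange equation is extracted by perturbing $u$ with $\varphi \in C_{c}^{\infty}(\Omega)$ supported in $\{ u > \delta \}$ and passing $\delta \to 0^{+}$ through monotone convergence; the strong minimum principle for $\A$-superharmonic functions forces $u > 0$ in $\Omega$, and a Diaz--Saa-type argument exploiting the strict decrease of $t \mapsto t^{q - 1}$ yields uniqueness.

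For the \emph{only if} direction, let $u \in H_{0}^{1, p}(\Omega; w)$ be a finite energy solution. Testing the equation with $u$ (via truncation and monotone convergence) gives the energy identity $\int_{\Omega} u^{q} \, d\sigma \asymp \| \nabla u \|_{L^{p}(\Omega; w)}^{p} < \infty$. For $f \in C_{c}^{\infty}(\Omega)$ with $f \ge 0$, inserting the regularized Picone test function $f^{p} / (u + \varepsilon)^{p - 1}$ into the weak formulation of \eqref{eqn:semilin} and invoking a Picone-type pointwise inequality adapted to $\A$ yields
\begin{equation*}
\int_{\Omega} \frac{f^{p} u^{q - 1}}{(u + \varepsilon)^{p - 1}} \, d \sigma \le C(\alpha, \beta) \int_{\Omega} | \nabla f |^{p} w \, dx,
\end{equation*}
and monotone convergence as $\varepsilon \to 0^{+}$ produces the weighted trace bound $\int_{\Omega} f^{p} u^{q - p} \, d \sigma \le C \| \nabla f \|_{L^{p}(\Omega; w)}^{p}$. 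H\"older's inequality with exponents $p / q$ and $p / (p - q)$ then gives
\begin{equation*}
\int_{\Omega} f^{q} \, d\sigma \le \left( \int_{\Omega} f^{p} u^{q - p} \, d\sigma \right)^{q / p} \left( \int_{\Omega} u^{q} \, d\sigma \right)^{(p - q) / p},
\end{equation*}
which combined with the previous estimate and the finiteness of $\int_{\Omega} u^{q} \, d\sigma$ yields precisely \eqref{eqn:trace_ineq}.

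The hardest step I expect is the rigorous implementation of the Picone-type inequality in the weighted quasilinear setting: under the stated structural hypotheses on $\A$ the pointwise bound $\A(x, \nabla u) \cdot \nabla(f^{p}/(u + \varepsilon)^{p - 1}) \le C \, |\nabla f|^{p} w$ only holds with a constant depending on the ellipticity ratio $\beta / \alpha$, and both the admissibility of $f^{p}/(u + \varepsilon)^{p - 1}$ as an $H_{0}^{1, p}(\Omega; w)$ test function and the passage $\varepsilon \to 0^{+}$ on the set where $u$ is small demand care, since $\sigma$ may concentrate there while $u^{q - p}$ need not be $\sigma$-integrable a priori. The uniqueness step is similarly delicate: the natural Diaz--Saa test functions $(u_{1}^{p} - u_{2}^{p}) / u_{i}^{p - 1}$ become singular near $\{ u_{i} = 0 \}$ and require a matching regularization.
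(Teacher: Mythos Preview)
Your \emph{only if} argument is essentially correct, though the paper's route is shorter: rather than invoking Picone to control $\int f^{p} u^{q-p}\,d\sigma$, it simply tests the weak formulation with $\varphi$ to get $\int \varphi\, u^{q-1}\,d\sigma \le \beta\,\|\nabla u\|_{L^{p}(w)}^{p-1}\|\nabla \varphi\|_{L^{p}(w)}$ directly, and then splits $\varphi^{q} = u^{q(1-q)}(\varphi u^{q-1})^{q}$ with H\"older exponents $1/(1-q)$ and $1/q$. This sidesteps exactly the regularization difficulties you flag as the hardest step.

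Your \emph{if} argument, however, has a genuine gap. The operator $\A$ is assumed only to satisfy the structural conditions \eqref{eqn:coercive}--\eqref{eqn:homogenity}; it is \emph{not} assumed to be the $z$-gradient of an integrand, nor even to be differentiable in $z$. Hence the functional $J(v) = \frac{1}{p}\int \A(x,\nabla v)\cdot\nabla v\,dx - \frac{1}{q}\int v^{q}\,d\sigma$ need not have $-\divergence\A(x,\nabla u)=\sigma u^{q-1}$ as its Euler--Lagrange equation. (Already for $p=2$ and $\A(x,z)=M(x)z$ with $M$ having positive-definite symmetric part but nonzero antisymmetric part, minimizing $J$ produces $-\divergence(M_{\mathrm{sym}}\nabla u)=\sigma u^{q-1}$, not the desired equation.) The ``convexity assumption on $\A$'' you invoke is really strict monotonicity, which does not by itself yield a variational structure.

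The paper avoids this by working with the monotone-operator framework throughout: it solves the regularized problems $-\divergence\A(x,\nabla u_{k})=\sigma_{k}(u_{k}+1/k)^{q-1}$ via Minty--Browder (where the singular term is truncated and the measure is approximated by $\sigma_{k}\in S_{c}(\Omega)$), obtains the uniform energy bound by testing with $u_{k}$ and applying \eqref{eqn:trace_ineq}, and passes to the limit using the Harnack-type convergence for $\A$-superharmonic functions. Uniqueness then follows by testing the difference of two solutions against itself, using monotonicity of $\A$ and of $t\mapsto t^{q-1}$; no Diaz--Saa machinery is needed.
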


The sufficiency part of this theorem is a generalization of \cite[Theorem 5.1]{MR2592976}.
The necessity part seems to be new even if $\divergence \A(x, \nabla u) = \laplacian u$.
Oliva and Petitta \cite{MR3712944} mentioned a related characterization, but the connection with \eqref{eqn:trace_ineq} was not considered.
A restricted similar result was proved by Bal and Garain \cite{bal2021weighted}.
The case of $1 \le q < p$ has been treated in \cite[Theorems 1.2 and 1.3]{MR4309811}.

Additionally, we prove  a nonlinear version of \cite[Theorem 6]{MR1866062}.

\begin{theorem}\label{thm:bdd_sol}
Let $\sigma \in \M^{+}_{0}(\Omega) \setminus \{ 0 \}$.
Let $h \colon (0, \infty) \to (0, \infty)$ be a continuously differentiable nonincreasing function.
Assume that there exists a continuous weak supersolution $v \in H^{1, p}_{\loc}(\Omega; w) \cap C( \cl{\Omega} )$ to \eqref{eqn:p-laplace}.
Then there exists a unique continuous weak solution $u \in H^{1, p}_{\loc}(\Omega; w) \cap C( \cl{\Omega} )$ to
\begin{equation}\label{eqn:singular}
\begin{cases}
\displaystyle
- \divergence \A(x, \nabla u) = \sigma h(u) 
& \text{in} \ \Omega,
\\
u > 0
& \text{in} \ \Omega,
\\
u = 0
&
\text{on} \ \partial\Omega.
\end{cases}
\end{equation}
Conversely, if there exists a continuous weak supersolution $u \in H^{1, p}_{\loc}(\Omega; w) \cap C( \cl{\Omega} )$ to \eqref{eqn:singular},
then there exists a continuous weak solution $v \in H^{1, p}_{\loc}(\Omega; w) \cap C( \cl{\Omega} )$ to \eqref{eqn:p-laplace}. 
\end{theorem}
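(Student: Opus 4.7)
The plan is a comparison-based Perron scheme. In the forward direction I would first manufacture from $v$ a continuous supersolution $V$ of \eqref{eqn:singular}, then solve non-singular regularized problems, and finally pass to the limit using $V$ as a uniform upper barrier.

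For the transformation, set $G(s) := \int_0^s h(r)^{-1/(p-1)}\, dr$ for $s \ge 0$ and $H := G^{-1}$. Since $h$ is continuously differentiable, positive, and nonincreasing on $(0,\infty)$, $G$ is a $C^1$-bijection of $[0,\infty)$ onto itself (the integrand is bounded by $h(s)^{-1/(p-1)}$ on $[0,s]$, and $G(\infty)=\infty$ because $h(r) \le h(1)$ for $r \ge 1$). Hence $H$ is concave, $H(0)=0$, and $H'(t)^{p-1} = h(H(t))$. Put $V := H(v)$. The strong minimum principle for $\A$-superharmonic functions gives $v > 0$ in $\Omega$, so $H'(v)$ is locally bounded, $V \in H^{1,p}_{\loc}(\Omega; w) \cap C(\cl{\Omega})$, and $V = 0$ on $\partial\Omega$. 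For any nonnegative $\phi \in C_c^\infty(\Omega)$, writing $\psi := H'(v)^{p-1}\phi$ and using a chain rule together with the $(p-1)$-homogeneity identity $\A(x, H'(v)\nabla v) = H'(v)^{p-1} \A(x, \nabla v)$ gives
\[
\int_\Omega \A(x, \nabla V) \cdot \nabla\phi\, dx = \int_\Omega \A(x, \nabla v) \cdot \nabla\psi\, dx - (p-1)\int_\Omega H'(v)^{p-2} H''(v)\, \A(x, \nabla v) \cdot \nabla v\, \phi\, dx.
\]
The first term is at least $\int \psi\, d\sigma = \int H'(v)^{p-1} \phi\, d\sigma$ because $v$ is a supersolution of \eqref{eqn:p-laplace} and $\psi$ is a nonnegative admissible test function, and the second is nonnegative because $H'' \le 0$ and $\A(x, \nabla v)\cdot\nabla v \ge 0$ by coercivity. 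Using $H'(v)^{p-1} = h(V)$, I conclude that $V$ weakly satisfies $-\divergence \A(x, \nabla V) \ge h(V)\, \sigma$.

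For the approximation, take $h_k(t) := h(t + 1/k)$, which is $C^1$, nonincreasing, and bounded by $h(1/k)$. Since $h(1/k)^{1/(p-1)} v$ is a continuous supersolution to $-\divergence \A(x, \nabla \cdot) \ge h(1/k)\sigma$, a Schauder fixed point or monotone iteration inside the order interval $[0, h(1/k)^{1/(p-1)} v]$ produces a continuous weak solution $u_k \in H^{1,p}_{\loc}(\Omega; w) \cap C(\cl{\Omega})$ of $-\divergence \A(x, \nabla u_k) = \sigma h(u_k + 1/k)$ with $u_k > 0$ in $\Omega$ and $u_k = 0$ on $\partial\Omega$. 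Testing $(u_k - u_{k+1})^+$ and $(u_k - V)^+$ against the paired equations, invoking Theorem \ref{thm:comparison_principle}, and exploiting the monotonicity of $\A$ in the gradient together with the nonincreasingness of $h$ yield $u_k \le u_{k+1}$ and $u_k \le V$. Hence $u := \lim_k u_k$ satisfies $0 < u_1 \le u \le V$, vanishes continuously on $\partial\Omega$, and, by local H\"older/$H^{1,p}$ estimates together with Boccardo--Gallou\"et-type a.e.\ gradient convergence $\nabla u_k \to \nabla u$ (using $h(u_k + 1/k) \le h(u_1)$ on compact subsets of $\Omega$), solves \eqref{eqn:singular} weakly. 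Uniqueness comes from the same $(u_1 - u_2)^+$-test applied to two solutions. For the converse, any continuous supersolution $u$ of \eqref{eqn:singular} is bounded by some $M := \|u\|_{L^\infty(\Omega)} < \infty$, and $h(u) \ge h(M)$ makes $\tilde v := h(M)^{-1/(p-1)} u$ a continuous supersolution of \eqref{eqn:p-laplace}; a standard Perron construction with $\tilde v$ as an obstacle then produces the desired continuous weak solution $v$.

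The principal obstacle is the first step --- producing $V$ and verifying the supersolution inequality rigorously against the measure $\sigma$. This requires the nonlinear chain rule globally in $\Omega$ (handled by the interior positivity of $v$, which makes $H'(v)$ locally Lipschitz, together with truncation against smooth cutoffs) and careful sign tracking of the $H''$-correction via the coercivity of $\A$. Once $V$ is available, everything downstream is a standard monotone-compactness argument combined with the comparison principle.
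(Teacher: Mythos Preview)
Your overall architecture---the transformation $G(s)=\int_0^s h^{-1/(p-1)}$ and its inverse, the regularized problems with $h_k(t)=h(t+1/k)$, monotonicity of $\{u_k\}$, and the upper barrier $V=H(v)$---matches the paper's Theorem~\ref{thm:singular} and Lemma~\ref{lem:bound_for_approximate_sol} closely (the paper proves the equivalent statement $g(u_k)\le \w_\A\sigma_k$ instead of your $u_k\le V$, but this is the same inequality). The comparison arguments and the converse reduction via the scaling $h(M)^{-1/(p-1)}u$ are also the same in spirit.

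The genuine gap is the \emph{continuity} of $u$ in $\Omega$. You write that $u$ is continuous ``by local H\"older/$H^{1,p}$ estimates,'' and you already assert that each $u_k$ lies in $C(\cl\Omega)$ as output of the iteration. Neither claim is justified: the datum is the \emph{measure} $h(u_k+1/k)\,\sigma$, and De~Giorgi--Nash--Moser type H\"older estimates do not apply to general measure right-hand sides. What you actually have at your disposal is the assumption that the \emph{supersolution} $v$ to \eqref{eqn:p-laplace} is continuous, and you need a mechanism to transfer this to $u$. The paper does this via the Kilpel\"ainen--Mal\'y Wolff potential criterion \cite[Theorem 4.20]{MR1264000}: a bounded $\A$-superharmonic function with Riesz measure $\mu$ is continuous at $x_0$ if and only if $\sup_{x\in B(x_0,R)}\W_{1,p,w}^{R}\mu(x)\to 0$ as $R\to 0$. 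Continuity of $v$ forces this condition for $\sigma$; since $h(u)$ is locally bounded (weak Harnack), the same condition holds for $h(u)\,\sigma$, whence $u$ is continuous. The identical argument is needed in the converse to show that $\w_\A\sigma$ (your ``Perron solution'') is continuous---a bounded supersolution alone does not give this.

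A smaller issue: your uniqueness sketch tests with $(u_1-u_2)_+$, but the solutions are only in $H^{1,p}_{\loc}(\Omega;w)$, not $H_0^{1,p}(\Omega;w)$, so this function need not be an admissible test function on all of $\Omega$. The paper fixes this by working on the compactly contained set $D=\{u_1>u_2+\epsilon\}$ (compactness follows because $u_1$ vanishes continuously on $\partial\Omega$ and $u_2\ge 0$) and testing with $(u_1-u_2-\epsilon)_+\in H_0^{1,p}(D;w)$.
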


General existence results of classical solutions to elliptic partial differential equations with singular nonlinearity
were first established by Crandall, Rabinowitz and Tartar \cite{MR427826}.
In  \cite{MR1866062}, M\^{a}agli and Zribi proved an existence theorem for $\divergence \A(x, \nabla u) = \laplacian u$ and $\sigma$ in the Kato class
and presented the bounds \eqref{eqn:bound_sing} and \eqref{eqn:bound_for_sing_eq} using the Green potential of $\sigma$.
We replace the Green potential with the solution to \eqref{eqn:p-laplace}.
See also one-dimensional results by Taliaferro \cite{MR548961} and further developments \cite{MR1020724, MR1400567}.


\subsection*{Organization of the paper}
In Sect. \ref{sec:preliminaries}, we present auxiliary results from nonlinear potential theory.
In Sect. \ref{sec:potentials} we provide a framework to solve \eqref{eqn:p-laplace}.
In Sect. \ref{sec:trace}, we prove Theorem \ref{thm:main_theorem}.
In Sect. \ref{sec:weak-trace}, we extend Theorem \ref{thm:main_theorem} using capacitary conditions.
This section is independent of Sect. \ref{sec:app}.
In Sect. \ref{sec:app}, we apply the framework in Sect. \ref{sec:potentials} to Eq. \eqref{eqn:singular}
and prove Theorems \ref{thm:singular_fe} and \ref{thm:bdd_sol} as a consequence.

\subsection*{Notation}
We use the following notation.
Let $\Omega$ be a domain (connected open subset) in $\R^{n}$. 
\begin{itemize}
\item
$\mathbf{1}_{E}(x) :=$ the indicator function of a set $E$.
\item
$C_{c}^{\infty}(\Omega) :=$
the set of all infinitely-differentiable functions with compact support in $\Omega$.
\item
$\M^{+}(\Omega) :=$ the set of all nonnegative Radon measures on $\Omega$.
\item
$L^{p}(\Omega; \mu) :=$ the $L^{p}$ space with respect to $\mu \in \M^{+}(\Omega)$.
\end{itemize}
For simplicity, we often write $L^{p}(\Omega; \mu)$ as $L^{p}(\mu)$.
For a ball $B = B(x, R)$ and $\lambda > 0$, $\lambda B := B(x, \lambda R)$.
For measures $\mu$ and $\nu$, we denote $\nu \le \mu$ if $\mu - \nu$ is a nonnegative measure.
For a sequence of extended real valued functions $\{ f_{j} \}_{j = 1}^{\infty}$,
we denote $f_{j} \uparrow f$
if $f_{j + 1} \ge f_{j}$ for all $j \ge 1$ and $\lim_{j \to \infty} f_{j} = f$.
The letters $c$ and $C$ denote various constants with and without indices.

\section{Preliminaries}\label{sec:preliminaries}

\subsection{Weighted Sobolev spaces}

First, we recall basics of nonlinear potential theory from \cite{MR2305115}.
Let $1 < p < \infty$ be a fixed constant.
A Lebesgue measurable function $w$ on $\R^{n}$
is said to be the \textit{weight} on $\R^{n}$ if $w \in L^{1}_{\loc}(\R^{n}; dx)$ and $w(x) > 0$ $dx$-a.e.
We write $w(E) = \int_{E} w \, dx$ for a Lebesgue measurable set $E \subset \R^{n}$.
We always assume that $w$ is \textit{$p$-admissible},
that is, positive constants $C_{D}$, $C_{P}$ and $\lambda \ge 1$ exist, such that
\[
w(2B) \le C_{D} w(B)
\]
and
\[
\fint_{B} |f - f_{B}| \, dw \le C_{P} \, \diam(B) \left( \fint_{\lambda B} |\nabla f|^{p} \, dw \right)^{\frac{1}{p}},
\quad
\forall f \in C_{c}^{\infty}(\R^{n}),
\]
where $B$ is an arbitrary ball in $\R^{n}$,
$\fint_{B} = w(B)^{-1} \int_{B}$
and
$f_{B} = \fint_{B} f \, dw$.
For the basic properties of $p$-admissible weights,
see \cite[Chapter A.2]{MR2867756}, \cite[Chapter 20]{MR2305115} and the references therein.
Every Muckenhoupt $A_{p}$-weight is $p$-admissible.
One important property of $p$-admissible weights is the Sobolev inequality. 
In particular, the following form of the Poincar\'{e} inequality holds:
\begin{equation*}\label{eqn:poincare}
\int_{B} |f|^{p} \, dw
\le
C \, \diam(B)^{p} \int_{B} |\nabla f|^{p} \, dw,
\quad
\forall f \in C_{c}^{\infty}(B),
\end{equation*}
where $C$ is a constant depending only on $p$, $C_{D}$, $C_{P}$ and $\lambda$.

Let $\Omega$ be a bounded domain in $\R^{n}$.
The weighted Sobolev space $H^{1, p}(\Omega; w)$ is the closure of $C^{\infty}(\Omega)$
with respect to the norm
\[
\| u \|_{H^{1, p}(\Omega; w)}
:=
\left(
\int_{\Omega} |u|^{p} + |\nabla u|^{p} \, d w
\right)^{\frac{1}{p}}.
\]
The corresponding local space $H^{1, p}_{\loc}(\Omega; w)$ is defined in the usual manner.
We denote by $H_{0}^{1, p}(\Omega; w)$ the closure of $C_{c}^{\infty}(\Omega)$ in $H^{1, p}(\Omega; w)$.
Since $\Omega$ is bounded, we can take $\| \nabla \cdot \|_{L^{p}(\Omega; w)}$ 
as the norm of $H_{0}^{1, p}(\Omega; w)$ by the Poincar\'{e} inequality.

Let $\Omega \subset \R^{n}$ be open and let $K \subset \Omega$ be compact.
The \textit{(variational) $(p, w)$-capacity} $\capacity_{p, w}(K, \Omega)$
of the condenser $(K, \Omega)$ is defined by
\[
\capacity_{p, w}(K, \Omega)
:=
\inf \left\{
\| \nabla u \|_{L^{p}(\Omega; w)}^{p} \colon u \geq 1 \ \text{on} \ K, \ u \in C_{c}^{\infty}(\Omega)
\right\}.
\]
Since $\Omega \subset \R^{n}$ is bounded,
$\capacity_{p, w}(E, \Omega) = 0$ if and only if $C_{p, w}(E) = 0$,
where $C_{p, w}(\cdot)$ is the (Sobolev) capacity of $E$.
We say that a property holds \textit{quasieverywhere} (q.e.)
if it holds except on a set of $(p, w)$-capacity zero.
An extended real valued function $u$ on $\Omega$ is called as \textit{quasicontinuous}
if for every $\epsilon > 0$ there exists an open set $G$ such that
$C_{p, w}(G) < \epsilon$ and $u|_{\Omega \setminus G}$ is continuous.
Every $u \in H^{1, p}_{\loc}(\Omega; w)$ has a quasicontinuous representative $\tilde{u}$
such that $u = \tilde{u}$ a.e.

We denote by $\M^{+}_{0}(\Omega)$ the set of all Radon measures $\mu$
that are absolutely continuous with respect to the $(p, w)$-capacity.
If $\mu \in \M^{+}_{0}(\Omega)$ is finite,
then the integral $\int_{\Omega} f \, d \mu$ is well-defined for any $(p, w)$-quasicontinuous function $f$ on $\Omega$.

\subsection{$\A$-superharmonic functions}

For $u \in H^{1, p}_{\loc}(\Omega; w)$,
we define the $\A$-Laplace operator $\divergence \A(x, \nabla \cdot)$ by
\[
\langle - \divergence \A(x, \nabla u), \varphi \rangle
=
\int_{\Omega} \A(x, \nabla u) \cdot \nabla \varphi \, dx,
\quad
\forall \varphi \in C_{c}^{\infty}(\Omega).
\]
The precise assumptions on $\A \colon \Omega \times \R^{n} \to \R^{n}$
are as follows:
For each $z \in \R^{n}$, $\A(\cdot, z)$ is measurable, for each $x \in \Omega$, $\A(x, \cdot)$ is continuous,
and there exist $0 < \alpha \le \beta < \infty$ such that
\begin{align}
\A(x, z) \cdot z \ge \alpha w(x) |z|^{p}, \label{eqn:coercive}
\\ 
|\A(x, z)| \le \beta w(x) |z|^{p - 1},  \label{eqn:growth}
\\
\left( \A(x, z_{1}) - \A(x, z_{2}) \right) \cdot (z_{1} - z_{2}) > 0, \label{eqn:monotonicity}
\\ 
\A(x, t z) = t |t|^{p - 2} \A(x, z) \label{eqn:homogenity}
\end{align}
for all $x \in \Omega$, $z, z_{1}, z_{2} \in \R^{n}$, $z_{1} \not = z_{2}$ and $t \in \R$.
If $u \in H^{1, p}_{\loc}(\Omega; w)$ satisfies
\[
\int_{\Omega} \A(x, \nabla u) \cdot \nabla \varphi \, dx = (\ge) \, 0,
\quad
\forall \varphi \in C_{c}^{\infty}(\Omega), \ \varphi \ge 0,
\]
then it is called a \textit{weak solution (supersolution)} to
$- \divergence \A(x, \nabla u) = 0$ in $\Omega$.

A function $u \colon \Omega \to ( - \infty, \infty]$ is called \textit{$\A$-superharmonic} if
$u$ is lower semicontinuous in $\Omega$, is not identically infinite,
and satisfies the comparison principle on each subdomain $D \Subset \Omega$;
if $h \in H^{1, p}_{\loc}(D; w) \cap C(\cl{D})$ is a continuous weak solution to $- \divergence \A(x, \nabla u) = 0$ in $D$
and if $u \geq h$ on $\partial D$, then $u \geq h$ in $D$.

If $u$ is an $\A$-superharmonic function in $\Omega$,
then for any $k > 0$, $\min\{ u, k \}$ is a weak supersolution to $- \divergence \A(x, \nabla u) = 0$ in $\Omega$.
Conversely, if $u$ is a weak supersolution to $- \divergence \A(x, \nabla u) = 0$ in $\Omega$,
then its \textit{lsc-regularization}
\[
u^{*}(x)
:=
\lim_{r \to 0} \essinf_{B(x, r)} u
\]
is $\A$-superharmonic in $\Omega$.
If $u$ and $v$ are $\A$-superharmonic in $\Omega$ and $u(x) \leq v(x)$ for a.e. $x \in \Omega$,
then $u \leq v$ in the pointwise sense.

A Radon measure $\mu = \mu[u]$ is called the \textit{Riesz measure} of $u$ if
\[
\lim_{k \to \infty}
\int_{\Omega} \A(x, \nabla \min\{ u, k\}) \cdot \nabla \varphi \, dx
=
\int_{\Omega} \varphi \, d \mu,
\quad
\forall \varphi \in C_{c}^{\infty}(\Omega).
\]
It is known that every $\A$-superharmonic function has a unique Riesz measure.

The following weak continuity result was given by Trudinger and Wang \cite{MR1890997}.

\begin{theorem}[{\cite[Theorem 3.1]{MR1890997}}]\label{thm:TW}
Suppose that $\{ u_{k} \}_{k = 1}^{\infty}$ is a sequence of nonnegative $\A$-superharmonic functions in $\Omega$.
Assume that $u_{k} \to u$ a.e. in $\Omega$ and that $u$ is $\A$-superharmonic in $\Omega$.
Let $\mu[u_{k}]$ and $\mu[u]$ be the Riesz measures of $u_{k}$ and $u$, respectively.
Then $\mu[u_{k}]$ converges to $\mu[u]$ weakly, that is,
\[
\int_{\Omega} \varphi \, d \mu[u_{k}] \to \int_{\Omega} \varphi \, d \mu[u],
\quad
\forall \varphi \in C_{c}^{\infty}(\Omega).
\]
\end{theorem}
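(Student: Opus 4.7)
The plan is to reduce the weak convergence of Riesz measures to a convergence statement for truncations and then push through the nonlinear operator using monotonicity. For $t > 0$ write $T_t u := \min\{u, t\}$. Since $\mu[u_k]$ is defined as the limit (in $t$) of $-\divergence \A(x, \nabla T_t u_k)$, the central step is to prove that for every fixed $t > 0$ and $\varphi \in C_c^\infty(\Omega)$,
\[
\int_{\Omega} \A(x, \nabla T_t u_k) \cdot \nabla \varphi \, dx \longrightarrow \int_{\Omega} \A(x, \nabla T_t u) \cdot \nabla \varphi \, dx \quad \text{as } k \to \infty,
\]
after which a diagonal argument combined with a uniform-in-$k$ tail estimate as $t \to \infty$ yields the theorem.

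Fix $\Omega' \Subset \Omega$ with $\spt \varphi \subset \Omega'$. The truncations $T_t u_k$ are bounded nonnegative weak supersolutions, so a standard Caccioppoli estimate (testing with $(t - T_t u_k)\eta^p$ for a suitable cutoff $\eta$) gives the uniform bound $\sup_k \| \nabla T_t u_k \|_{L^p(\Omega';w)} < \infty$. Together with the assumed pointwise a.e. convergence $T_t u_k \to T_t u$, this yields, along a subsequence, $\nabla T_t u_k \wkto \nabla T_t u$ weakly in $L^p(\Omega';w)$ and $T_t u_k \to T_t u$ strongly in $L^p(\Omega';w)$.

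The principal obstacle is upgrading the weak gradient convergence to a.e. (hence strong $L^p_{\loc}$) convergence, which is required because $\A(x, \nabla T_t u_k)$ cannot be passed through weak convergence alone. I would invoke \eqref{eqn:monotonicity} via a Boccardo-Murat-type argument: choose $\eta \in C_c^\infty(\Omega')$ with $\eta \equiv 1$ near $\spt \varphi$, test the equations for $T_t u_k$ and $T_t u$ against $\eta (T_t u_k - T_t u)$, and rearrange to bound
\[
I_k := \int_{\Omega'} \bigl( \A(x, \nabla T_t u_k) - \A(x, \nabla T_t u) \bigr) \cdot \bigl( \nabla T_t u_k - \nabla T_t u \bigr) \eta \, dx
\]
by terms that vanish under the already-established weak convergence of the gradients and strong $L^p$ convergence of the truncations. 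Since \eqref{eqn:monotonicity} makes the integrand of $I_k$ nonnegative, $I_k \to 0$ forces $\nabla T_t u_k \to \nabla T_t u$ in measure on $\spt \varphi$; combined with \eqref{eqn:growth} and the uniform $L^p$ bound, a Vitali-type argument gives $\A(x, \nabla T_t u_k) \to \A(x, \nabla T_t u)$ strongly in $L^{p/(p-1)}_{\loc}(\Omega';w)$, and the desired limit relation for each fixed $t$ follows.

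The final step is the passage $t \to \infty$, which requires showing
\[
\lim_{t \to \infty} \sup_k \bigl| \mu[u_k](\spt \varphi) - \mu[T_t u_k](\spt \varphi) \bigr| = 0.
\]
This rests on the defining property of the Riesz measure combined with uniform capacitary estimates on the superlevel sets $\{u_k > t\}$, which in turn follow from locally uniform $L^1$ bounds on the family $\{u_k\}$ (a consequence of the weak Harnack inequality for nonnegative $\A$-superharmonic functions together with the a.e.\ convergence assumption). A diagonal extraction and uniqueness of the candidate limit $\mu[u]$ then upgrade the subsequential statement to the full sequence, completing the proof.
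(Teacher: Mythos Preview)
The paper does not supply a proof of this theorem: it is simply quoted from Trudinger and Wang \cite[Theorem 3.1]{MR1890997}, so there is nothing to compare your proposal against within the present paper.

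As a standalone sketch your outline follows the right architecture (truncate, get uniform local energy bounds, upgrade weak to a.e.\ gradient convergence, then pass $t\to\infty$), but the step where you claim $I_k\to 0$ has a genuine gap. When you test the supersolution equations for $T_t u_k$ with $\eta(T_t u_k - T_t u)$ you pick up the term $\int_{\Omega'} \eta\,(T_t u_k - T_t u)\, d\nu_k$, where $\nu_k$ is the Riesz measure of the truncation $T_t u_k$. The measures $\nu_k$ are only uniformly bounded in mass on $\spt\eta$, and $T_t u_k - T_t u$ converges to zero merely a.e.\ (and in $L^p(w)$), not uniformly nor in capacity a priori; dominated convergence is unavailable because the $\nu_k$ vary with $k$ and may be singular with respect to $w$. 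Thus ``terms that vanish under the already-established weak convergence of the gradients and strong $L^p$ convergence of the truncations'' does not cover this contribution. Trudinger and Wang's actual argument handles precisely this obstruction by a more delicate choice of test function (a further truncation of the difference, in the spirit of Boccardo--Gallou\"et/Dal Maso--Murat) together with a careful splitting into the sets where $u_k$ is below or above the truncation level; your sketch would need to incorporate that refinement to close the loop.
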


The Harnack-type convergence theorem follows from combining Theorem \ref{thm:TW} and \cite[Lemma 7.3]{MR2305115}:
If $\{ u_{k} \}_{k = 1}^{\infty}$ is a nondecreasing sequence of $\A$-superharmonic functions in $\Omega$
and if $u := \lim_{k \to \infty} u_{k} \not \equiv \infty$, 
then $u$ is $\A$-superharmonic in $\Omega$ and $\mu[u_{k}]$ converges to $\mu[u]$ weakly.

%

\section{Minimal $\A$-superharmonic solution to \eqref{eqn:p-laplace}}\label{sec:potentials}

Next, we introduce classes of smooth measures. For detail, see \cite{MR4309811} and the references therein.

\begin{definition}
For $\mu \in (H_{0}^{1, p}(\Omega))^{*} \cap \M^{+}(\Omega)$,
we denote by $\w_{\A}^{0} \mu$ the lsc-regularization of  the weak solution $u \in H_{0}^{1, p}(\Omega; w)$ to
\[
\int_{\Omega} \A (x, \nabla u) \cdot \nabla \varphi \, dx = \langle \mu, \varphi \rangle,
\quad
\forall \varphi \in H_{0}^{1, p}(\Omega; w).
\]
\end{definition}

Furthermore, we define a class of smooth measures $S_{c}[\mathcal{A}](\Omega)$ by
\[
S_{c}[\mathcal{A}](\Omega)
:=
\left\{
\mu \in (H_{0}^{1, p}(\Omega))^{*} \cap \M^{+}(\Omega) \colon \sup_{\Omega} \w^{0}_{\A} \mu < \infty \ \text{and} \ \spt \mu \Subset \Omega
\right\}.
\]
By the two-sided Wolff potential estimate for $\A$-superharmonic functions due to Kilpel{\"a}inen and Mal{\'y} (see \cite{MR1264000,MR1386213}),
if $u \ge 0$ is $\A$-superharmonic in $B(x, 2R)$ and if $\mu$ is the Riesz measure of $u$,
then,
\[
\frac{1}{C} \W_{1, p, w}^{R} \mu(x)
\leq
u(x)
\leq
C \left(
\inf_{B(x, R)} u
+
\W_{1, p, w}^{2R} \mu(x)
\right),
\] 
where $C = C(p, C_{D}, C_{P}, \lambda)$ and
$\W_{1, p, w}^{R} \mu$ is the \textit{truncated Wolff potential} of $\mu$, which is defined by
\begin{equation}\label{eqn:def_wolff}
\W_{1, p, w}^{R} \mu (x)
:=
\int_{0}^{R}
\left(
r^{p} \frac{ \mu(B(x, r))}{ w(B(x, r)) }
\right)^{\frac{1}{p - 1}}
\frac{dr}{r}.
\end{equation}
Using this estimate twice, we can prove that 
$S_{c}[\mathcal{A}](\Omega)=  S_{c}[w(x) |\nabla \cdot|^{p - 2} \nabla \cdot](\Omega)$.
Below, we write $S_{c}[\mathcal{A}](\Omega)$ as $S_{c}(\Omega)$ for simplicity.

%

\begin{theorem}[{\cite{MR4309811}}]\label{thm:approximation}
Let $\mu \in \M^{+}(\Omega)$.
Then $\mu \in \M^{+}_{0}(\Omega)$ if and only if
there exists an increasing sequence of compact sets $\{ F_{k } \}_{k = 1}^{\infty}$ such that
$\mu_{k} := \mathbf{1}_{F_{k}} \mu \in S_{c}(\Omega)$ for all $k \ge 1$
and
$\mu\left( \Omega \setminus \bigcup_{k = 1}^{\infty} F_{k} \right) = 0$.
\end{theorem}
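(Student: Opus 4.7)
The reverse direction ($\Leftarrow$) is immediate. Each $\mu_{k} \in S_{c}(\Omega) \subset (H_{0}^{1, p}(\Omega; w))^{*}$ vanishes on sets of zero $(p, w)$-capacity: a compact set $E$ of zero capacity admits test functions $\varphi_{i} \in C_{c}^{\infty}(\Omega)$ with $\varphi_{i} \geq 1$ on $E$ and $\| \nabla \varphi_{i} \|_{L^{p}(w)} \to 0$, so $\mu_{k}(E) \leq \langle \mu_{k}, \varphi_{i} \rangle \to 0$. Since $\mu_{k} \uparrow \mathbf{1}_{\bigcup_{k} F_{k}} \mu$, and the latter agrees with $\mu$ on a set of full $\mu$-measure, $\mu$ inherits absolute continuity with respect to $C_{p, w}$, placing it in $\M^{+}_{0}(\Omega)$.

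For the nontrivial direction ($\Rightarrow$), my plan is a two-parameter truncation (spatial, then in the Wolff-potential level) followed by diagonalization. First, fix an increasing exhaustion $K_{j} \Subset K_{j + 1} \Subset \Omega$ with $\bigcup_{j} K_{j} = \Omega$; then $\mathbf{1}_{K_{j}} \mu$ is a finite, compactly supported element of $\M^{+}_{0}(\Omega)$. Fix $R > \diam \Omega$ and set $\W_{j} := \W_{1, p, w}^{2 R}(\mathbf{1}_{K_{j}} \mu)$. Since $\mathbf{1}_{K_{j}} \mu$ is finite and in $\M^{+}_{0}(\Omega)$, standard potential theory gives $\W_{j} < \infty$ quasi-everywhere, hence $\mathbf{1}_{K_{j}} \mu$-a.e. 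The lsc-closed level sets $B_{j, n} := \{ \W_{j} \leq n \}$ therefore satisfy $\mathbf{1}_{K_{j}} \mu(\Omega \setminus B_{j, n}) \to 0$ as $n \to \infty$, so I may choose $n(j)$ with $\mu(K_{j} \setminus B_{j, n(j)}) < 2^{-j}$. Set $\tilde{F}_{k} := \bigcup_{j = 1}^{k} (K_{j} \cap B_{j, n(j)})$, an increasing sequence of compact subsets of $\Omega$. A Borel-Cantelli argument on $\sum_{j} \mu(K_{j} \setminus B_{j, n(j)}) < \infty$, combined with $\bigcup_{j} K_{j} = \Omega$, yields $\mu(\Omega \setminus \bigcup_{k} \tilde{F}_{k}) = 0$.

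The main step is to verify $\mu_{k} := \mathbf{1}_{\tilde{F}_{k}} \mu \in S_{c}(\Omega)$. For each building block $\nu := \mathbf{1}_{K_{j} \cap B_{j, n(j)}} \mu$, the pointwise inequality $\W_{1, p, w}^{2 R} \nu \leq \W_{j} \leq n(j)$ holds on $\spt \nu$, and a maximum principle for Wolff potentials (available in the $p$-admissible setting) propagates this bound, up to a universal constant, to all of $\R^{n}$. The resulting finite Wolff energy $\int \W_{1, p, w}^{2 R} \nu \, d \nu \leq n(j) \, \nu(\Omega) < \infty$ identifies $\nu$ as an element of $(H_{0}^{1, p}(\Omega; w))^{*}$ via the Hedberg-Wolff-type energy characterization of the dual norm; once $\w_{\A}^{0} \nu$ is thereby defined, the upper Wolff estimate from Section \ref{sec:preliminaries} bounds it in $L^{\infty}(\Omega)$, with the boundary-infimum term negligible because $\w_{\A}^{0} \nu$ vanishes quasi-everywhere on $\partial \Omega$. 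Since $\mu_{k} \leq \sum_{j = 1}^{k} \mathbf{1}_{K_{j} \cap B_{j, n(j)}} \mu$, the comparison principle transfers the $L^{\infty}$ bound to $\w_{\A}^{0} \mu_{k}$, and combined with the compact support $\tilde{F}_{k} \Subset \Omega$ this gives $\mu_{k} \in S_{c}(\Omega)$. The principal obstacle is precisely this step: the coordinated use of the Wolff potential maximum principle, the Wolff-energy/dual-space correspondence, and the uniform upper Wolff bound on $\w_{\A}^{0} \nu$, all in the weighted $p$-admissible setting. The remaining exhaustion-and-diagonalization bookkeeping is routine.
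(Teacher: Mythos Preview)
The paper does not prove Theorem~\ref{thm:approximation}; it is quoted from \cite{MR4309811}, so there is no in-paper argument to compare against. Your outline follows the standard strategy for such decomposition results (spatial exhaustion, then truncation at potential level sets, then diagonalisation), and the reverse implication is handled correctly.

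One step in the forward direction needs tightening. Your handling of the upper Wolff estimate---``the boundary-infimum term [is] negligible because $\w_{\A}^{0}\nu$ vanishes quasieverywhere on $\partial\Omega$''---does not quite work as stated: the Kilpel\"ainen--Mal\'y bound applies only when $B(x,2R)\subset\Omega$, so the infimum is taken over an interior ball and is not a boundary quantity. You can repair this with a Harnack-chain iteration toward $\partial\Omega$, but there is a cleaner route that also sidesteps the weighted Wolff maximum principle and the Hedberg--Wolff energy identification you flag as obstacles. Since $\mathbf{1}_{K_{j}}\mu$ is finite, the existence theory for finite-measure data (e.g.\ \cite[Theorem~6.6]{MR1386213}, invoked elsewhere in the paper) produces an $\A$-superharmonic $u_{j}$ with Riesz measure $\mathbf{1}_{K_{j}}\mu$ and $\min\{u_{j},k\}\in H_{0}^{1,p}(\Omega;w)$ for every $k$; being $\A$-superharmonic, $u_{j}$ is finite q.e., hence $\mu$-a.e. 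Truncate at sublevel sets of $u_{j}$ rather than of $\W_{j}$: with $F_{j,n}:=K_{j}\cap\{u_{j}\le n\}$ (compact, since $u_{j}$ is lsc) and $\nu:=\mathbf{1}_{F_{j,n}}\mu$, the bounded supersolution $\min\{u_{j},n+1\}\in H_{0}^{1,p}(\Omega;w)$ has Riesz measure $\ge\nu$. This shows in one stroke that $\nu\in(H_{0}^{1,p}(\Omega;w))^{*}$ and, via Theorem~\ref{thm:comparison_principle}, that $\w_{\A}^{0}\nu\le n+1$ globally; hence $\nu\in S_{c}(\Omega)$. The remainder of your argument (Borel--Cantelli, diagonalisation) goes through unchanged.
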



\begin{definition}[\cite{MR3567503}]
Let $\mu$ be a nonnegative Radon measure on $\Omega$.
We say that a function $u$ is an \textit{$\A$-superharmonic solution (supersolution)}
to $- \divergence \A(x, \nabla u) = \mu$ in $\Omega$,
if $u$ is $\A$-superharmonic in $\Omega$ and $\mu[u] = \mu$ ($\mu[u] \ge \mu$),
where $\mu[u]$ is the Riesz measure of $u$.
We say that a nonnegative solution $u$ is \textit{minimal}
if $v \geq u$ in $\Omega$ whenever
$v$ is a nonnegative supersolution to the same equation.
\end{definition}

\begin{definition}
For $\mu \in \M^{+}_{0}(\Omega)$, we define 
\[
\w_{\A} \mu(x) := \sup \left\{ \w_{\A}^{0} \nu(x) \colon \nu \in S_{c}(\Omega) \ \text{and} \  \nu \le \mu \right\}.
\]
\end{definition}

If $\mu \in (H_{0}^{1, p}(\Omega))^{*} \cap \M^{+}(\Omega)$, then $\w_{\A} \mu =  \w_{\A}^{0} \mu$.
If $\w_{\A} \mu \not \equiv \infty$,
then $u = \w_{\A} \mu$ is the minimal nonnegative $\A$-superharmonic solution to
$- \divergence \A(x, \nabla u) = \mu$ in $\Omega$.
From the argument in \cite[Theorem 3.1]{MR4309811}, the following comparison principle holds.

\begin{theorem}\label{thm:comparison_principle}
Let $\Omega$ be a bounded domain.
Let $v$ be a nonnegative $\A$-superharmonic function in $\Omega$ with the Riesz measure $\nu$.
Assume that $\mu \in \M^{+}_{0}(\Omega)$ and that $\mu \le \nu$.
Then $\w_{\A} \mu \leq v$ in $\Omega$.
\end{theorem}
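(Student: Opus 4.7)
The plan is to reduce, via the definition of $\w_{\A}\mu$ as the pointwise supremum of $\w_{\A}^{0}\lambda$ over $\lambda \in S_{c}(\Omega)$ with $\lambda \le \mu$, to showing $\w_{\A}^{0}\lambda \le v$ in $\Omega$ for each such $\lambda$. Fixing one, I would set $u := \w_{\A}^{0}\lambda$; this is a bounded nonnegative $\A$-superharmonic function in $H_{0}^{1,p}(\Omega;w)$ whose Riesz measure is $\lambda$, and by the definition of $S_{c}(\Omega)$ one has $M := \|u\|_{L^{\infty}(\Omega)} < \infty$. For each $k > M$, the truncation $v_{k} := \min\{v,k\} \in H^{1,p}_{\loc}(\Omega;w) \cap L^{\infty}(\Omega)$ is a bounded weak supersolution with Riesz measure $\nu_{k}$, and since $\{v < k\}$ is open by the lower semicontinuity of $v$ and $v_{k} = v$ there, $\nu_{k}$ coincides with $\nu$ on $\{v < k\}$. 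The comparison will be carried out via the test function $\varphi_{k} := (u - v_{k})_{+} \in H_{0}^{1,p}(\Omega;w)$; since $u \in H_{0}^{1,p}$ and $v_{k} \ge 0$, its trace vanishes, and the support $\{\varphi_{k} > 0\} = \{u > v_{k}\} \subset \{v_{k} < M\} \subset \{v < k\}$ forces $v_{k} = v$ and $\nabla v_{k} = \nabla v$ a.e.\ on that support.

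Testing the equation for $u$ against $\varphi_{k}$ gives $\int_{\Omega} \A(x,\nabla u)\cdot\nabla\varphi_{k}\,dx = \int_{\Omega}\varphi_{k}\,d\lambda$, while an approximation of $\varphi_{k}$ by compactly supported admissible functions would produce the Riesz identity $\int_{\Omega} \A(x,\nabla v_{k})\cdot\nabla\varphi_{k}\,dx = \int_{\Omega}\varphi_{k}\,d\nu_{k}$. Then the support containment and the hypothesis $\lambda \le \mu \le \nu$ yield
\begin{equation*}
\int_{\Omega}\varphi_{k}\,d\nu_{k} = \int_{\{v<k\}}\varphi_{k}\,d\nu \ge \int_{\{v<k\}}\varphi_{k}\,d\lambda = \int_{\Omega}\varphi_{k}\,d\lambda.
\end{equation*}
Subtracting the two identities and using $\nabla v_{k} = \nabla v$ on $\{u > v_{k}\}$ gives
\begin{equation*}
0 \ge \int_{\{u > v_{k}\}}\bigl(\A(x,\nabla u) - \A(x,\nabla v)\bigr)\cdot(\nabla u - \nabla v)\,dx,
\end{equation*}
so the strict monotonicity assumption \eqref{eqn:monotonicity} forces $\nabla u = \nabla v$ a.e.\ on $\{u > v_{k}\}$, hence $\nabla\varphi_{k} \equiv 0$ a.e. The Poincar\'{e} inequality in $H_{0}^{1,p}(\Omega;w)$ then yields $\varphi_{k} \equiv 0$, so $u \le v_{k} \le v$ a.e.; the pointwise comparison of $\A$-superharmonic functions recalled in Sect.~\ref{sec:preliminaries} upgrades this to $u \le v$ everywhere in $\Omega$.

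The main obstacle is the rigorous derivation of the Riesz identity $\int_{\Omega} \A(x,\nabla v_{k})\cdot\nabla\varphi_{k}\,dx = \int_{\Omega}\varphi_{k}\,d\nu_{k}$ for this specific $\varphi_{k}$, because $\varphi_{k}$ lies only in $H_{0}^{1,p}(\Omega;w)$ and need not be compactly supported in $\Omega$, while $\A(\cdot,\nabla v_{k})$ is only locally integrable. I would handle this by approximating $\varphi_{k}$ from below by $\eta_{j}\min\{\varphi_{k},s\}$ with $\eta_{j} \in C_{c}^{\infty}(\Omega)$ satisfying $\eta_{j} \uparrow 1$, testing against these legitimate compactly supported functions, and then passing to the limit $j \to \infty$ and $s \to \infty$; the error term $\int \varphi_{k}\,\A(x,\nabla v_{k})\cdot\nabla\eta_{j}\,dx$ has to be controlled using the support inclusion $\{\varphi_{k} > 0\} \subset \{v < k\}$ together with the fact that $\varphi_{k} \le u$ has vanishing $H_{0}^{1,p}$-trace, following the approach developed in \cite[Theorem 3.1]{MR4309811}.
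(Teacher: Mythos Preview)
The paper does not supply its own proof of this theorem; it simply records that the result follows ``from the argument in \cite[Theorem 3.1]{MR4309811}.'' Your proposal is a faithful reconstruction of precisely that argument---reducing to $\lambda\in S_{c}(\Omega)$, truncating $v$, testing with $(u-v_{k})_{+}$, and invoking monotonicity---and you correctly isolate the one genuinely delicate step (justifying the Riesz identity for $v_{k}$ against the non-compactly-supported test function $\varphi_{k}$) while pointing to the same reference for its resolution.
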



By the minimality, we can regard $u$ as a solution to \eqref{eqn:p-laplace}.
We will discuss sufficient conditions for $\w_{\A} \mu \not \equiv \infty$ below.

\section{Strong-type inequality}\label{sec:trace}

Let us recall known results of the trace inequalities for the upper triangle case $q < p$.
For  the case $q \ge p$, we refer to \cite{MR1411441, MR2777530} and the references therein. 
The first characterization of \eqref{eqn:trace_ineq} was given
by Maz'ya and Netrusov \cite{MR1313906} using a capacitary condition.
Cascante, Ortega and Verbitsky \cite{MR1734322} and Verbitsky \cite{MR1747901}
studied non-capacitary characterizations for inequalities of the type \eqref{eqn:trace_ineq}
based on the Hedberg-Wolff theorem.
For example, they proved the following:
If $\Omega = \R^{n}$ and $w = 1$, then the best constant $C_{1}$ in \eqref{eqn:trace_ineq} satisfies
\begin{equation}\label{eqn:COV-energy}
\frac{1}{c} \, C_{1}
\le
\left(
\int_{\R^{n}}
\left( \W_{1, p} \sigma \right)^{\frac{q(p - 1)}{p - q}}
\, d \sigma
\right)^{\frac{p - q}{qp}}
\le
c \, C_{1},
\end{equation}
where $c = c(n, p, q)$ and $\W_{1, p} \sigma = \W_{1, p, 1}^{\infty} \sigma$ is the Wolff potential of $\sigma$ (see \eqref{eqn:def_wolff}).

Verbitsky and his colleagues recently studied sublinear type elliptic problems of the form \eqref{eqn:COV-energy} for the case $1 < q < p$.
See \cite{MR3311903, MR3567503, MR4105916, MR3881877} and the references therein.
Their work shows a connection between the existence of positive solutions to \eqref{eqn:semilin},
energy conditions of the type of \eqref{eqn:COV-energy} (or equivalent \eqref{eqn:trace_ineq}) and certain weighted norm inequalities.
An extension of their result to $(p, w)$-Laplace equations on bounded domains was given by
the author \cite{MR4309811} (see also \cite{MR4079054}).
The counterpart of the Cascante-Ortega-Verbitsky theorem was also presented.
Theorem \ref{thm:main_theorem} is a variant of it.

Following the strategy in \cite{MR1747901}, we prove a logarithmic Caccioppoli type estimate.
For $(p, w)$-Laplace operators, it follows from the Picone-type inequality (see \cite{MR1618334,MR3273896}).

\begin{lemma}\label{lem:picone}
Let $\sigma \in S_{c}(\Omega)$, and let $u = \w_{\A} \sigma$.
Then
\[
\int_{\Omega} |f|^{p} \frac{d \sigma}{u^{p - 1}}
\le
\beta^{p} \alpha^{1 - p} \int_{\Omega} |\nabla f|^{p} \, d w,
\quad \forall f \in C_{c}^{\infty}(\Omega).
\]
\end{lemma}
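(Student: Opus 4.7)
Because $\sigma \in S_{c}(\Omega)$, the potential $u = \w_{\A} \sigma = \w_{\A}^{0} \sigma$ is bounded and belongs to $H_{0}^{1,p}(\Omega; w)$, and it solves
\[
\int_{\Omega} \A(x, \nabla u) \cdot \nabla \varphi \, dx = \int_{\Omega} \varphi \, d\sigma
\qquad \forall \varphi \in H_{0}^{1,p}(\Omega; w),
\]
where on the right we use the quasicontinuous representative of $\varphi$. For each $\epsilon > 0$, the plan is to test this identity with
\[
\phi_{\epsilon} := \frac{|f|^{p}}{(u + \epsilon)^{p - 1}}, \qquad f \in C_{c}^{\infty}(\Omega).
\]
Since $u$ is bounded and $u + \epsilon \ge \epsilon$, the function $t \mapsto t^{-(p-1)}$ is Lipschitz on the range of $u + \epsilon$, and $|f|^{p}$ has compact support in $\Omega$, so $\phi_{\epsilon} \in H_{0}^{1,p}(\Omega; w)$ is an admissible test function. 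Its gradient is
\[
\nabla \phi_{\epsilon} = \frac{p \, |f|^{p - 2} f \, \nabla f}{(u + \epsilon)^{p - 1}} - (p - 1) \, \frac{|f|^{p} \, \nabla u}{(u + \epsilon)^{p}} .
\]

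Substituting and splitting into two integrals, the coercivity assumption \eqref{eqn:coercive} controls the second term from below by $\alpha w(x) |\nabla u|^{p}$, and the growth assumption \eqref{eqn:growth} gives a pointwise bound $\beta w(x) |\nabla u|^{p-1}$ on $|\A(x, \nabla u)|$ for the first. Writing $A := |f| |\nabla u| / (u + \epsilon)$, I obtain
\[
\int_{\Omega} \frac{|f|^{p}}{(u + \epsilon)^{p - 1}} \, d\sigma
\le p\beta \int_{\Omega} A^{p - 1} |\nabla f| \, dw - (p - 1) \alpha \int_{\Omega} A^{p} \, dw.
\]

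Next, apply Young's inequality $A^{p - 1} |\nabla f| \le \frac{p - 1}{p} \lambda^{p/(p-1)} A^{p} + \frac{1}{p} \lambda^{-p} |\nabla f|^{p}$ and choose $\lambda = (\alpha / \beta)^{(p-1)/p}$ so that the $A^{p}$ contributions cancel exactly. A short arithmetic check gives $\beta \lambda^{-p} = \beta^{p} \alpha^{1 - p}$, yielding
\[
\int_{\Omega} \frac{|f|^{p}}{(u + \epsilon)^{p - 1}} \, d\sigma
\le \beta^{p} \alpha^{1 - p} \int_{\Omega} |\nabla f|^{p} \, dw.
\]
Finally, I let $\epsilon \downarrow 0$: the left-hand integrand is monotone increasing in $\epsilon^{-1}$, so monotone convergence gives the desired estimate with $|f|^{p}/u^{p-1}$ on the left (and the inequality is vacuous on $\{f = 0\}$ even where $u = 0$).

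The main technical obstacle is the justification that $\phi_{\epsilon}$ is genuinely admissible and that the integral against $d\sigma$ equals the duality pairing; this rests on $\sigma \in S_{c}(\Omega) \subset (H_{0}^{1,p}(\Omega; w))^{*} \cap \M_{0}^{+}(\Omega)$ together with the quasicontinuity of $\phi_{\epsilon}$, so that the Cap$_{p,w}$-absolute continuity of $\sigma$ makes the pointwise-a.e. equality of quasicontinuous representatives enough. Everything else is algebraic; the only balancing act is the choice of $\lambda$, which is forced by the requirement that the coefficient $(p-1)\beta \lambda^{p/(p-1)} - (p-1)\alpha$ in front of $\int A^{p} \, dw$ vanish.
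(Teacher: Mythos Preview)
Your proof is correct and follows essentially the same Picone-type argument as the paper: test the equation with $|f|^{p}(u+\epsilon)^{1-p}$, use \eqref{eqn:coercive} and \eqref{eqn:growth} pointwise, balance with Young's inequality, and pass to the limit $\epsilon \downarrow 0$ by monotone convergence. The only cosmetic differences are that the paper first reduces to $f \ge 0$ and applies Young's inequality without an explicit parameter, whereas you carry $|f|^{p}$ and optimize over $\lambda$; the resulting constant $\beta^{p}\alpha^{1-p}$ is the same.
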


\begin{proof}
Without loss of generality we may assume that $f \ge 0$.
Let $v = u + \epsilon$,  where $\epsilon$ is a positive constant.
Since $v \ge \epsilon > 0$, we have $f^{p} v^{1 - p} \in H_{0}^{1, p}(\Omega; w)$; hence,
\[
\int_{\Omega} f^{p} \frac{d \sigma}{v^{p - 1}}
=
\int_{\Omega} \A(x, \nabla v) \cdot \nabla (f^{p} v^{1 - p}) \, dx.
\]
By \eqref{eqn:coercive} and \eqref{eqn:growth}, for a.e. $x \in \Omega$,
\[
\begin{split}
\A(x, \nabla v(x)) \cdot \nabla (f(x)^{p} v(x)^{1 - p})
& =
p \A(x, \nabla v(x)) \cdot \nabla f(x) f(x)^{p - 1} v(x)^{1 - p}
\\
& \quad +
(1 - p) \A(x, \nabla v(x)) \cdot \nabla v(x) f(x)^{p} v(x)^{- p}
\\
& \le
p \beta w(x) |\nabla v(x)|^{p - 1} |\nabla f(x)| f(x)^{p - 1} v(x)^{1 - p}
\\
& \quad +
(1 - p) \alpha w(x) |\nabla v(x)|^{p} f(x)^{p} v(x)^{-p}.
\end{split}
\]
Then, Young's inequality $ab \le \frac{1}{p} a^{p} + \frac{p - 1}{p} b^{\frac{p}{p - 1}}$ ($a, b \ge 0$) yields
\[
\begin{split}
\A(x, \nabla v(x)) \cdot \nabla (f(x)^{p} v(x)^{1 - p})
\le
\beta^{p} \alpha^{1 - p} |\nabla f(x)|^{p} w(x).
\end{split}
\]
Therefore,
\[
\int_{\Omega} |f|^{p} \frac{d \sigma}{(u + \epsilon)^{p - 1}}
\le
\beta^{p} \alpha^{1 - p} \int_{\Omega} |\nabla f|^{p} \, d w,
\quad \forall f \in C_{c}^{\infty}(\Omega).
\]
The desired inequality follows from the monotone convergence theorem.
\end{proof}

\begin{proof}[Proof of Theorem \ref{thm:main_theorem}]
We first prove the existence part.
Take $\{ \sigma_{k} \}_{k = 1}^{\infty} \subset S_{c}(\Omega)$ such that
$\sigma_{k} = \mathbf{1}_{F_{k}} \sigma$ and $\mathbf{1}_{F_{k}} \uparrow \mathbf{1}_{\Omega}$ $\sigma$-a.e.
Set $u_{k} = \w_{\A} \sigma_{k} \in H_{0}^{1, p}(\Omega; w) \cap L^{\infty}(\Omega)$.
By \eqref{eqn:homogenity}, for each $\epsilon > 0$, we have
\[
\begin{split}
&
\frac{1}{q} \left( \frac{p - 1}{p - q} \right)^{p - 1}
\int_{\Omega} ( u_{k}^{\frac{q (p - 1)}{p - q}} - \epsilon^{\frac{q (p - 1)}{p - q}} )_{+} \, d \sigma_{k}
\\
& =
\int_{ \{ x \in \Omega \colon u_{k}(x) > \epsilon \} } \A(x, \nabla u_{k}^{\frac{p - 1}{p - q}} ) \cdot \nabla u_{k}^{\frac{p - 1}{p - q}}  \, dx.
\end{split}
\]
Take the limit $\epsilon \to 0$. By the monotone convergence theorem and \eqref{eqn:coercive},
\[
\begin{split}
\alpha
\int_{\Omega} |\nabla u_{k}^{\frac{p - 1}{p - q}}|^{p} \, d w
& \le
\int_{\Omega} \A(x, \nabla u_{k}^{\frac{p - 1}{p - q}} ) \cdot \nabla u_{k}^{\frac{p - 1}{p - q}} \, dx
\\
& =
\frac{1}{q} \left( \frac{p - 1}{p - q} \right)^{p - 1}
\int_{\Omega} u_{k}^{\frac{q (p - 1)}{p - q}} \, d \sigma_{k}.
\end{split}
\]
The right-hand side is finite, and thus, $u_{k}^{\frac{p - 1}{p - q}} \in H_{0}^{1, p}(\Omega; w)$.
By \eqref{eqn:trace_ineq} and density,
\[
\begin{split}
\int_{\Omega} u_{k}^{\frac{q (p - 1)}{p - q}} \, d \sigma_{k}
& \le
C_{1}^{q} \left( \int_{\Omega} |\nabla u_{k}^{\frac{p - 1}{p - q}}|^{p} \, d w \right)^{ \frac{q}{p} }.
\end{split}
\]
Combining the two inequalities, we obtain
\[
\begin{split}
\left( \int_{\Omega} |\nabla u_{k}^{\frac{p - 1}{p - q}}|^{p} \, d w \right)^{\frac{p - q}{p}}
\le
\frac{1}{q} \left( \frac{p - 1}{p - q} \right)^{p - 1} \frac{1}{\alpha} C_{1}^{q}.
\end{split}
\]
By the Poincar\'{e} inequality, $u_{k} \uparrow u \not \equiv \infty$,
so $u = \w_{\A} \sigma$ is $\A$-superharmonic in $\Omega$ and satisfies \eqref{eqn:p-laplace} by Theorem \ref{thm:TW} and the monotone convergence theorem.
By the uniqueness of the limit, $u^{\frac{p - 1}{p - q}}$ satisfies the latter inequality in \eqref{eqn:energy_bound}.

Conversely, assume the existence of $u$.
Take $\{ \sigma_{k} \}_{k = 1}^{\infty} \subset S_{c}(\Omega)$ such that
$\sigma_{k} = \mathbf{1}_{F_{k}} \sigma$ and $\mathbf{1}_{F_{k}} \uparrow \mathbf{1}_{\Omega}$ $\sigma$-a.e. by using Theorem \ref{thm:approximation},
and set $u_{k} = \w_{\A} \sigma_{k}$.
By Lemma \ref{lem:picone},
\begin{equation}\label{eqn:lower_bound@main_theorem}
\begin{split}
\int_{\Omega} |f|^{q} \, d \sigma_{k}
& =
\int_{\Omega} |f|^{q} u_{k}^{p - 1} \frac{d \sigma_{k}}{u_{k}^{p - 1}}
\\
& \le
\left( \int_{\Omega} |f|^{p} \frac{d \sigma_{k}}{u_{k}^{p - 1}} \right)^\frac{q}{p}
\left( \int_{\Omega} u_{k}^{\frac{p(p - 1)}{p - q}} \frac{d \sigma_{k}}{u_{k}^{p - 1}} \right)^{\frac{p - q}{p}}
\\
& \le
\left( \beta^{p} \alpha^{1 - p} \int_{\Omega} |\nabla f|^{p} \, d w \right)^\frac{q}{p}
\left( \int_{\Omega} u_{k}^{\frac{q(p - 1)}{p - q}} \, d \sigma_{k} \right)^{\frac{p - q}{p}}.
\end{split}
\end{equation}
Note that $v := u^{\frac{p - 1}{p - q}} \ge u_{k}^{\frac{p - 1}{p - q}} $ by Theorem \ref{thm:comparison_principle}.
Since $v \in H_{0}^{1, p}(\Omega; w)$, by density,
\[
\begin{split}
\int_{\Omega} u_{k}^{\frac{q(p - 1)}{p - q}}  \, d \sigma_{k}
\le
\int_{\Omega} v^{q} \, d \sigma_{k}
\le
\left( \beta^{p} \alpha^{1 - p} \int_{\Omega} |\nabla v|^{p} \, d w \right)^\frac{q}{p}
\left( \int_{\Omega} u_{k}^{\frac{q(p - 1)}{p - q}}  \, d \sigma_{k} \right)^{\frac{p - q}{p}}.
\end{split}
\]
Using \eqref{eqn:lower_bound@main_theorem} again, we obtain
\[
\int_{\Omega} |f|^{q} \, d \sigma_{k}
\le
\beta^{p} \alpha^{1 - p}
\left( \int_{\Omega} |\nabla f|^{p} \, d w \right)^\frac{q}{p}
\left( \int_{\Omega} |\nabla v|^{p} \, d w \right)^\frac{p - q}{p}.
\]
The former inequality in \eqref{eqn:energy_bound} follows from the monotone convergence theorem.
\end{proof}

If $\A$ and $\A'$ satisfy the same structure conditions, then
\[
\left( \int_{\Omega} (\w_{\A} \sigma)^{ \frac{q(p - 1)}{p - q} }  \, d \sigma \right)^{\frac{p - q}{qp}}
\approx_{\alpha, \beta}
\left( \int_{\Omega} (\w_{\A'} \sigma)^{ \frac{q(p - 1)}{p - q} }  \, d \sigma \right)^{\frac{p - q}{qp}}.
\]
Note that we do not have a global pointwise estimate between solutions.
The boundary behavior of two solutions may not be comparable.

Finally, we observe examples of $\sigma$ satisfying \eqref{eqn:trace_ineq}.
If $w \equiv 1$, $p < n$ and $\sigma \in L^{m}(\Omega; dx)$ with
\begin{equation}\label{eqn:exp_BO}
m = \left( \frac{p^{*}}{q} \right)' = \left( \frac{n p}{q(n - p)} \right)',
\end{equation}
then \eqref{eqn:trace_ineq} follows from Sobolev's inequality and H\"{o}lder's inequality.

Other type examples can be found from Hardy type inequalities.
For one-dimensional cases, we refer to \cite[Section 1.3.3]{MR2777530} and \cite{MR1395069}.
To present multi-dimensional sufficient conditions, we add an assumption to the boundary of $\Omega$.
We say that a bounded domain $\Omega$ is \textit{Lipschitz}
if for each $y \in \partial \Omega$,
there exist a local Cartesian coordinate system $(x_{1}, \dots, x_{n}) = (x', x_{n})$,
an open neighborhood $U = U_{y}$
and a Lipschitz function $h = h_{y}$ such that $\Omega \cap U = \{ (x', x_{n}) \colon x_{n} > h(x') \} \cap U$. 
If $\Omega$ is bounded and Lipschitz, then by the Hardy inequality in \cite[Theorem 1.6]{MR163054},
\begin{equation}\label{eqn:hardy}
\int_{\Omega} |f|^{p} \delta^{t - p} \, dx
\le
C \int_{\Omega} |\nabla f|^{p} \delta^{t} \, dx,
\quad
\forall f \in C_{c}^{\infty}(\Omega),
\end{equation}
where $\delta(x) := \dist(x, \partial \Omega)$, $t < p - 1$ and $C$ is a constant independent of $f$.

\begin{proposition}\label{prop:trace}
Let $\Omega$ be a bounded Lipschitz domain.
Set $w = \delta^{t}$ and $d \sigma = \delta^{- s} dx$, where $-1 < t < p - 1$ and $1 \le s \le p - t$.
Then, for any $q \in (\frac{p (s - 1)}{p - 1 - t}, p)$, the trace inequality \eqref{eqn:trace_ineq} holds.
\end{proposition}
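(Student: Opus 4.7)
The plan is to interpolate between the weighted Hardy inequality \eqref{eqn:hardy} (which is the endpoint $q=p$) and a pure volume-weight estimate via H\"{o}lder's inequality. For any $f\in C_c^\infty(\Omega)$, I would first write the integrand as the product
\begin{equation*}
|f|^q \delta^{-s} = \bigl(|f|^p \delta^{t-p}\bigr)^{q/p}\cdot \delta^{(p-t)q/p - s},
\end{equation*}
a trivial identity whose point is to isolate the weight $\delta^{t-p}$ that fits Hardy. Then I would apply H\"{o}lder's inequality with the conjugate exponents $p/q$ and $p/(p-q)$ (which are admissible because $q<p$), obtaining
\begin{equation*}
\int_\Omega |f|^q\, d\sigma \le \left(\int_\Omega |f|^p \delta^{t-p}\, dx\right)^{q/p}\left(\int_\Omega \delta^{\alpha}\, dx\right)^{(p-q)/p},
\end{equation*}
with $\alpha := \bigl[(p-t)q - sp\bigr]/(p-q)$.

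Since $-1<t<p-1$, Hardy's inequality \eqref{eqn:hardy} bounds the first factor on the right by a constant times $\int_\Omega |\nabla f|^p \delta^t\, dx = \|\nabla f\|_{L^p(\Omega;w)}^p$. For the second factor I would invoke the standard fact that on a bounded Lipschitz domain $\int_\Omega \delta^{\alpha}\, dx$ is finite precisely when $\alpha>-1$; rearranging the inequality $\alpha>-1$ yields
\begin{equation*}
q(p-1-t) > p(s-1),
\end{equation*}
which, using that the hypothesis $t<p-1$ guarantees $p-1-t>0$, is equivalent to the standing assumption $q > p(s-1)/(p-1-t)$. Taking the $q$-th root produces \eqref{eqn:trace_ineq}.

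I do not expect a real obstacle here: the argument is a one-line H\"{o}lder interpolation between two off-the-shelf ingredients (Hardy's inequality and the asymptotics $\int_\Omega \delta^\alpha dx<\infty\iff \alpha>-1$). The only step that needs a moment of care is the exponent bookkeeping that matches $\alpha>-1$ with the prescribed range of $q$, together with noting that the upper endpoint $s = p-t$ is excluded (it forces $\alpha = -1$ and makes the interval for $q$ degenerate).
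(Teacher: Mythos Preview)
Your proposal is correct and matches the paper's proof essentially line for line: both split the integrand to isolate the Hardy weight $\delta^{t-p}$, apply H\"older with exponents $p/q$ and $p/(p-q)$, invoke \eqref{eqn:hardy} on the first factor, and reduce the second to the integrability $\int_\Omega \delta^{\alpha}\,dx<\infty$ for $\alpha>-1$, which the paper justifies via the inner cone property of Lipschitz domains. The only cosmetic difference is that the paper carries the weight $\delta^t\,dx$ through the H\"older step while you use Lebesgue measure, but the resulting exponent $\alpha=\bigl[(p-t)q-sp\bigr]/(p-q)$ and the range of $q$ are identical.
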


\begin{proof}
By the inner cone property of Lipschitz graphs, for each $r > -1$,
\[
\int_{\Omega} \delta^{r} \, dx < \infty.
\]
(For further information about the integrability of $\delta$, see \cite[Theorem 6]{MR1668136} and the references therein.)
Thus, by the Hardy inequality \eqref{eqn:hardy},
\[
\begin{split}
\int_{\Omega} |f|^{q} \delta^{- s} \, dx
& =
\int_{\Omega} \frac{ |f|^{q} }{ \delta^{q} } \delta^{q - s - t} \delta^{t} \, dx
\\
& \le
\left(
\int_{\Omega} \frac{ |f|^{p} }{ \delta^{p} } \delta^{t} \, dx
\right)^{\frac{q}{p}}
\left(
\int_{\Omega}
\delta^{ \frac{p(q - s - t)}{p - q} }
\delta^{t} \, dx
\right)^{\frac{p - q}{p}}
\\
& \le
C \left( \int_{\Omega} |\nabla f|^{p} \delta^{t} \, dx \right)^{\frac{q}{p}},
\quad
\forall f \in C_{c}^{\infty}(\Omega).
\end{split}
\]
This completes the proof.
\end{proof}

\begin{remark}\label{rem:weight}
The boundary of a bounded Lipschitz domain is $(n - 1)$-regular.
Thus, by \cite[Lemma 3.3]{MR2606245} (see also \cite{MR3215609} and \cite[Chapter A.3]{MR2867756}), 
$w = \delta^{t}$ is an $A_{p}$-weight (and hence a $p$-admissible weight) on $\R^{n}$ for $-1 < t < p - 1$.
\end{remark}

\begin{corollary}\label{cor:sufficient_condition}
Let $\Omega$ be a bounded Lipschitz domain and let $w = \delta^{t}$, where $-1 < t < p - 1$.
Let $1 \le s < p - t$.
Assume that $\sigma \in L^{1}_{\loc}(\Omega)$ satisfies
 $0 \le \sigma(x) \le C \delta(x)^{- s}$ for a.e. $x \in \Omega$.
Then, there exists a minimal nonnegative $\A$-superharmonic solution $u$ to \eqref{eqn:p-laplace}.
\end{corollary}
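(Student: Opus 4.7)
The plan is to reduce the corollary to a direct application of Theorem \ref{thm:main_theorem} by producing an exponent $q \in (0, p)$ for which the trace inequality \eqref{eqn:trace_ineq} holds for $\sigma$. This trace inequality will be extracted from Proposition \ref{prop:trace} applied to the majorizing measure $\delta^{-s}\,dx$.

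First I would verify the structural hypotheses. By Remark \ref{rem:weight}, $w = \delta^{t}$ is $p$-admissible on $\R^{n}$ for $-1 < t < p - 1$, so the potential-theoretic framework of Section \ref{sec:preliminaries} applies. Since $\sigma \in L^{1}_{\loc}(\Omega)$ and pointwise $0 \le \sigma(x) \le C\delta(x)^{-s}$, we have a pointwise domination $\sigma \le C\,\delta^{-s}\,dx$ as Radon measures on $\Omega$.

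Next I would apply Proposition \ref{prop:trace} to the comparison measure $d\sigma_{0} := \delta^{-s}\,dx$, which gives the inequality
\[
\| f \|_{L^{q}(\Omega; \sigma_{0})} \le C_{2} \| \nabla f \|_{L^{p}(\Omega; w)}, \quad \forall f \in C_{c}^{\infty}(\Omega),
\]
for any $q \in \left( \tfrac{p(s-1)}{p-1-t},\, p \right)$. The strict inequality $s < p - t$ guarantees $\tfrac{p(s-1)}{p-1-t} < p$, so this interval is nonempty; and since $s \ge 1$ and $p - 1 - t > 0$, its left endpoint is nonnegative, so any such $q$ automatically satisfies $0 < q < p$. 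Fix one such $q$. The pointwise domination $\sigma \le C\sigma_{0}$ then yields
\[
\| f \|_{L^{q}(\Omega; \sigma)} \le C^{1/q} \| f \|_{L^{q}(\Omega; \sigma_{0})} \le C^{1/q} C_{2} \| \nabla f \|_{L^{p}(\Omega; w)},
\]
so $\sigma$ itself satisfies \eqref{eqn:trace_ineq} with this $q$.

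Finally, Theorem \ref{thm:main_theorem} applied to $\sigma$ with this exponent $q$ produces the minimal nonnegative $\A$-superharmonic solution $u$ to \eqref{eqn:p-laplace}, completing the proof. There is no substantive obstacle here; the only thing to check carefully is that the admissible range of $q$ supplied by Proposition \ref{prop:trace} is nonempty and lies inside $(0,p)$, which is exactly what the hypotheses $-1 < t < p-1$ and $1 \le s < p - t$ enforce.
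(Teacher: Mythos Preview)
Your proposal is correct and follows exactly the paper's approach: the paper's proof is the one-liner ``Combine Proposition \ref{prop:trace}, Remark \ref{rem:weight} and Theorem \ref{thm:main_theorem},'' and you have spelled out precisely this combination, including the check that the interval $(\tfrac{p(s-1)}{p-1-t}, p)$ is nonempty and contained in $(0,p)$ under the hypotheses $-1 < t < p-1$ and $1 \le s < p-t$.
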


\begin{proof}
Combine Proposition \ref{prop:trace}, Remark \ref{rem:weight} and Theorem \ref{thm:main_theorem}.
\end{proof}

\begin{remark}
Since the boundary of a Lipschitz domain may not be smooth,
Corollary \ref{cor:sufficient_condition} is not trivial even if $\divergence \A(x, \nabla u) = \laplacian u$.
In fact, the Green function of a polygon is not comparable to $\delta$ near the corners. 
This result asserts that the threshold of $s$ still be $2 \, (= p)$ even if such a case.
Ancona \cite{MR856511} proved a more general existence theorem for (unweighted) linear elliptic equations by using strong barriers.
The author is unaware of extensions of his results to nonlinear equations.
\end{remark}

\begin{remark}
Note that $L^{p}(w)$-$L^{p}(\sigma)$ trace inequalities do not yield the existence of (bounded) solutions to \eqref{eqn:p-laplace}.
Let $\Omega = B(0, 1)$ and $d \sigma = \delta^{-2} \, dx$.
Then, the $L^{2}(dx)$-$L^{2}(d \sigma)$ trace inequality holds by \eqref{eqn:hardy},
but the Green potential of $\sigma$ does not exist.
\end{remark}

\section{Weak-type inequality and capacitary condition}\label{sec:weak-trace}


In \cite{MR1734322}, the following form of a weak-type trace inequality was studied:
\begin{equation}\label{eqn:weak_trace@prop:tr-wni-cap}
\| f \|_{L^{q, \infty}(\sigma)}
\le
C_{2} \| \nabla f \|_{L^{p}(w)},
\quad
\forall f \in C_{c}^{\infty}(\Omega).
\end{equation}
Here, $L^{q, \infty}(\sigma)$ is the Lorentz space with respect to $\sigma$ (see, e.g., \cite[Chapter 1]{MR2445437}).
By a truncation argument, \eqref{eqn:weak_trace@prop:tr-wni-cap} implies 
\[
\sigma(K)^{\frac{p}{q}} \le C_{2}^{p} \, \capacity_{p, w}(K, \Omega), \quad \forall K \Subset \Omega.
\]
Hence, by Maz'ya's capacitary inequality (see, e.g., \cite{MR2146047}, \cite[Lemma 6.22]{MR2867756}), 
\eqref{eqn:weak_trace@prop:tr-wni-cap} is equivalent to the embedding into $L^{q, p}(\sigma)$.
In particular, the condition \eqref{eqn:weak_trace@prop:tr-wni-cap} is weaker than \eqref{eqn:trace_ineq}
because $L^{q}(\sigma) = L^{q, q}(\sigma) \subsetneq L^{q, p}(\sigma)$ for $q < p$.


\begin{theorem}\label{thm;weak-COV}
Assume that $0 < q < p$.
Let $\sigma \in \M_{0}^{+}(\Omega)$ and let $C_{2}$ be the best constant of \eqref{eqn:weak_trace@prop:tr-wni-cap}.
Then,
\begin{equation}\label{eqn:COV@weak-COV}
\frac{ (\alpha / \beta) }{ \alpha^{ \frac{1}{p} } }
C_{2}
\le
\| \w_{\A} \sigma \|_{L^{\frac{q(p - 1)}{p - q}, \infty}(\sigma)}^{\frac{p - 1}{p}}
\le
4^{\frac{p - 1}{p - q}} 
\frac{1}{ \alpha^{ \frac{1}{p} } }
C_{2}.
\end{equation}
\end{theorem}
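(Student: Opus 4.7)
The plan is to mimic the two-sided strategy of Theorem~\ref{thm:main_theorem}, replacing the strong-type trace inequality by the equivalent \emph{isocapacitary} estimate $\sigma(G) \le C_{2}^{q} \capacity_{p, w}(G, \Omega)^{q/p}$ for every open $G \Subset \Omega$, obtained by truncating $f$ in~\eqref{eqn:weak_trace@prop:tr-wni-cap} and letting $\lambda \to 1^{-}$ (the same constant $C_{2}$ survives). By Theorem~\ref{thm:approximation} pick $\sigma_{k} = \mathbf{1}_{F_{k}} \sigma \in S_{c}(\Omega)$ with $\sigma_{k} \uparrow \sigma$ and set $u_{k} := \w_{\A} \sigma_{k} \in H_{0}^{1, p}(\Omega; w) \cap L^{\infty}(\Omega)$. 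Theorem~\ref{thm:comparison_principle} and the Harnack convergence after Theorem~\ref{thm:TW} yield $u_{k} \uparrow u = \w_{\A} \sigma$, so $\sigma_{k}(\{u_{k} > t\}) \uparrow \sigma(\{u > t\})$ for every $t > 0$ and consequently $\|u_{k}\|_{L^{r, \infty}(\sigma_{k})} \uparrow \|u\|_{L^{r, \infty}(\sigma)}$ with $r := q(p-1)/(p-q)$. It therefore suffices to establish both inequalities in~\eqref{eqn:COV@weak-COV} for the pair $(\sigma_{k}, u_{k})$ and pass to the limit.

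For the upper bound, apply the isocapacitary estimate to the open set $\{u_{k} > 2t\}$. The admissible function $\min\bigl((u_{k} - t)_{+}/t, 1\bigr) \in H_{0}^{1, p}(\Omega; w)$ gives $\capacity_{p, w}(\{u_{k} > 2t\}, \Omega) \le t^{-p} \int_{\{t < u_{k} < 2t\}} |\nabla u_{k}|^{p} \, dw$, and testing the equation with $\min\bigl((u_{k} - t)_{+}, t\bigr) \in H_{0}^{1, p}(\Omega; w)$ together with~\eqref{eqn:coercive} gives $\alpha \int_{\{t < u_{k} < 2t\}} |\nabla u_{k}|^{p} \, dw \le t \, \sigma_{k}(\{u_{k} > t\})$. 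With $a(t) := \sigma_{k}(\{u_{k} > t\})$, these combine (after the exponent $r + q(1-p)/p$ simplifies to $qr/p$) into the dyadic recursion
\[
(2 t)^{r} a(2 t) \le 2^{r} C_{2}^{q} \alpha^{-q/p} \bigl( t^{r} a(t) \bigr)^{q/p}.
\]
Because $u_{k} \in L^{\infty}$, $M := \sup_{t > 0} t^{r} a(t)$ is finite, so the recursion forces $M^{(p-q)/p} \le 2^{r} C_{2}^{q} \alpha^{-q/p}$; raising to the $(p-q)/(p q)$-th power and using $r/q = (p-1)/(p-q)$ produces the desired bound on $\|u_{k}\|_{L^{r, \infty}(\sigma_{k})}^{(p-1)/p}$.

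For the lower bound, apply Lemma~\ref{lem:picone} to $\sigma_{k}$ to obtain $\int_{\Omega} |f|^{p} \, d\sigma_{k}/u_{k}^{p-1} \le \beta^{p} \alpha^{1-p} \|\nabla f\|_{L^{p}(w)}^{p}$ for every $f \in C_{c}^{\infty}(\Omega)$. For $\lambda, t > 0$, split
\[
\sigma_{k}(\{|f| > \lambda\}) \le \sigma_{k}(\{|f| > \lambda\} \cap \{u_{k} \le t\}) + \sigma_{k}(\{u_{k} > t\}).
\]
On the first set $|f|^{p}/u_{k}^{p-1} > \lambda^{p}/t^{p-1}$, so Chebyshev combined with Lemma~\ref{lem:picone} bounds it by $(t^{p-1}/\lambda^{p}) \beta^{p} \alpha^{1-p} \|\nabla f\|_{L^{p}(w)}^{p}$, while the second piece is at most $t^{-r} \|u_{k}\|_{L^{r, \infty}(\sigma_{k})}^{r}$. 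Because $p + r - 1 = p (p - 1)/(p - q)$ and $q = r p/(p + r - 1)$, optimizing $t$ against $\lambda$ and taking $q$-th roots yields
\[
\|f\|_{L^{q, \infty}(\sigma_{k})} \le c(p, q)\, \beta\, \alpha^{(1-p)/p}\, \|u_{k}\|_{L^{r, \infty}(\sigma_{k})}^{(p-1)/p}\, \|\nabla f\|_{L^{p}(w)},
\]
which, by the definition of $C_{2}$, gives the lower bound in~\eqref{eqn:COV@weak-COV}.

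The main obstacle is the sharp bookkeeping of numerical constants. In the upper bound, the dyadic iteration must be set up with the test function cut at an intermediate level $t$ (rather than at $0$), so that only the geometric factor $2^{r}$ propagates through the bootstrap $M^{1 - q/p} \le (\text{const.}) C_{2}^{q} \alpha^{-q/p}$; a naive truncation loses a power and destroys the exponent $(p-1)/(p-q)$. In the lower bound, reducing the $(p, q)$-dependent factor $c(p, q)$ from the two-term split down to an absolute constant matching the right-hand side of~\eqref{eqn:COV@weak-COV} is the delicate step: either one optimizes $t$ exactly (exploiting the convexity of $t \mapsto t^{p-1} A + t^{-r} B$) or, alternatively, recasts the weak-type inequality through Maz'ya's capacitary characterization of the Lorentz norm $L^{q, p}(\sigma)$.
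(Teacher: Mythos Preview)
Your upper-bound argument coincides with the paper's: both pass to the isocapacitary form $\sigma(E)\le C_2^{q}\capacity_{p,w}(E,\Omega)^{q/p}$, test the equation with the truncation $\min\{(u-t)_{+},t\}$, and bootstrap the resulting recursion (your continuous-$t$ formulation is slightly cleaner than the paper's dyadic one and in fact yields the smaller constant $2^{(p-1)/(p-q)}$).

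For the lower bound the routes diverge. Your Picone-plus-level-set split does produce a weak trace inequality, but optimising $t$ in $t^{p-1}A+t^{-r}B$ leaves the factor $c(p,q)=\frac{p}{p-q}\bigl(\frac{p-q}{q}\bigr)^{q/p}>1$; neither ``exact optimisation'' nor Maz'ya's $L^{q,p}$ equivalence removes it, so as written you obtain only $c(p,q)^{-1/q}(\alpha/\beta)\alpha^{-1/p}C_{2}$ on the left of \eqref{eqn:COV@weak-COV}. The paper takes a different path: it proves two auxiliary results, a weighted-norm-inequality criterion built on the $\A$-capacitary potential of a condenser (Lemma~\ref{lem:weak-wni}) and the comparison estimate $\bigl\|\w_{\A}\nu/\w_{\A}\sigma\bigr\|_{L^{p-1,\infty}(\sigma)}\le\nu(\Omega)^{1/(p-1)}$ (Lemma~\ref{lem:derivative}), the latter obtained by testing with $k^{-1}\min\{(v-u)_{+},k\}$ and using the monotonicity \eqref{eqn:monotonicity}. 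Combining these through weak-type H\"older yields the stated left inequality. Your route is more elementary and recycles the machinery of Section~\ref{sec:trace}; the paper's route buys the sharper constant by exploiting the solution operator $\nu\mapsto\w_{\A}\nu$ for \emph{all} $\nu\in\M_{0}^{+}(\Omega)$, not just for $\sigma$ itself.
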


To prove the lower bound in \eqref{eqn:COV@weak-COV},
we consider the counterparts of Lemma 5.10 and Proposition 6.1 in \cite{MR3724493}.

\begin{lemma}\label{lem:weak-wni}
Let $0 < q < \infty$.
Assume that $\sigma \in \M_{0}^{+}(\Omega)$ satisfies the following  weighted norm inequality:
\begin{equation}\label{eqn:WNI@prop:tr-wni-cap}
\| \w_{\A} \nu \|_{L^{\frac{q(p- 1)}{p}, \infty}(\sigma)}^{p - 1}
\le
C_{2}' \nu(\Omega),
\quad \forall \nu \in \M^{+}_{0}(\Omega).
\end{equation}
Then, \eqref{eqn:weak_trace@prop:tr-wni-cap} holds with
$C_{2} = \left( \beta^{p} \alpha^{1 - p} C_{2}' \right)^{ \frac{1}{p} }$.
\end{lemma}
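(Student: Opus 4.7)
The plan is to combine Picone's inequality (Lemma \ref{lem:picone}) with the hypothesis \eqref{eqn:WNI@prop:tr-wni-cap} tested on the restriction of $\sigma$ to a super-level set of $f$. Fix a nonnegative $f \in C_{c}^{\infty}(\Omega)$ and $t > 0$; by inner regularity of $\sigma$ it suffices to bound $\sigma(K)$ for an arbitrary compact $K \Subset \{ f > t \}$. Set $\nu := \mathbf{1}_{K} \sigma \in \M^{+}_{0}(\Omega)$ and $u := \w_{\A} \nu$. Since $\sigma(K) < \infty$, the hypothesis applied to $\nu$ forces $u$ to be finite $\sigma$-a.e., so $u$ is not identically infinite.

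I would then apply Lemma \ref{lem:picone}. Because that lemma requires its test measure to lie in $S_{c}(\Omega)$, first approximate $\nu$ by an increasing sequence $\{ \nu_{j} \} \subset S_{c}(\Omega)$ supplied by Theorem \ref{thm:approximation}; Picone applied to each $\nu_{j}$ with potential $u_{j} := \w_{\A} \nu_{j}$, followed by passage to the limit (using $u_{j} \uparrow u$ and monotone convergence, so that $\int f^{p} u_{j}^{-(p-1)} \, d\nu_{j} \ge \int f^{p} u^{-(p-1)} \, d\nu_{j} \uparrow \int f^{p} u^{-(p-1)} \, d\nu$), yields
\[
t^{p} \int_{K} u^{-(p-1)} \, d\sigma \le \int_{\Omega} f^{p} u^{-(p-1)} \, d\nu \le \beta^{p} \alpha^{1-p} \| \nabla f \|_{L^{p}(w)}^{p},
\]
where the first inequality uses $f > t$ on $\spt \nu = K$ and $\nu = \mathbf{1}_{K} \sigma$.

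The heart of the argument is then to extract the reverse estimate
\[
\int_{K} u^{-(p-1)} \, d\sigma \gtrsim_{p, q} \frac{\sigma(K)^{p/q}}{C_{2}'}.
\]
The hypothesis rewrites as the distributional bound $\sigma(\{ u > s \}) \le (C_{2}' \sigma(K))^{q/p} s^{-q(p-1)/p}$ for every $s > 0$. Choosing $s_{0}$ so that $\sigma(\{ u > s_{0} \}) \le \tfrac{1}{2} \sigma(K)$ gives $\int_{K} u^{-(p-1)} \, d\sigma \ge s_{0}^{-(p-1)} \cdot \tfrac{1}{2} \sigma(K)$, and solving for $s_{0}$ in the distributional estimate produces exactly the power $p/q$ of $\sigma(K)$ on the right-hand side: this is precisely why the Lorentz exponent in the hypothesis is taken to be $q(p-1)/p$. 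Chaining the two bounds and taking the supremum over compact $K \Subset \{ f > t \}$ delivers
\[
t \, \sigma(\{ f > t \})^{1/q} \le \bigl( \beta^{p} \alpha^{1-p} C_{2}' \bigr)^{1/p} \| \nabla f \|_{L^{p}(w)},
\]
modulo a numerical factor depending only on $p, q$.

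The main obstacle is this reverse-moment estimate: converting a one-sided weak $L^{q(p-1)/p, \infty}$ bound on $u$ into an integral lower bound for $u^{-(p-1)}$ with the correct scaling $\sigma(K)^{p/q}$ and with the sharp constant claimed. The exponent matching is forced, but sharpening the constant to exactly $(\beta^{p} \alpha^{1-p} C_{2}')^{1/p}$ (rather than with a harmless universal $C_{p,q}$ factor) calls for a careful Hölder decomposition $\sigma(K) = \int_{K} u^{\alpha} \cdot u^{-\alpha} \, d\sigma$ with the weak-$L^{r}$ moment inequality on the first factor (valid for $\alpha \cdot a < r = q(p-1)/p$) and an optimization $\alpha \to 0^{+}$. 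The remaining ingredients — the $S_{c}(\Omega)$-approximation of $\nu$ and the two limit passages — are routine given Theorems \ref{thm:approximation} and \ref{thm:TW}.
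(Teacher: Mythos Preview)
Your approach is genuinely different from the paper's, and while it does produce the weak-type inequality, it only does so \emph{up to a numerical factor} depending on $p$ and $q$---not with the exact constant $C_{2} = (\beta^{p}\alpha^{1-p}C_{2}')^{1/p}$ that the lemma asserts.

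The paper does not use Picone's inequality at all. Instead, for a compact $K \subset \Omega$ it takes $\nu$ to be the \emph{Riesz measure of the $\A$-capacitary potential} of the condenser $(K,\Omega)$, not $\mathbf{1}_{K}\sigma$. Since that potential equals $1$ on $K$, one reads off $\sigma(K)^{p/q} \le \|\w_{\A}\nu\|_{L^{q(p-1)/p,\infty}(\sigma)}^{p-1}$ directly, and the known bound $\nu(\Omega)\le \beta^{p}\alpha^{1-p}\capacity_{p,w}(K,\Omega)$ (Kilpel\"ainen--Mal\'y) together with \eqref{eqn:WNI@prop:tr-wni-cap} gives $\sigma(K)^{p/q}\le \beta^{p}\alpha^{1-p}C_{2}'\,\capacity_{p,w}(K,\Omega)$. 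The truncation $\capacity_{p,w}(\{|f|\ge k\},\Omega)\le k^{-p}\|\nabla f\|_{L^{p}(w)}^{p}$ then yields the weak trace inequality with \emph{no} additional numerical constant. This is a two-line argument.

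Your route via $\nu=\mathbf{1}_{K}\sigma$ and Picone is correct in structure, but the ``reverse moment'' step cannot be made constant-free. Integrating the distributional bound $\sigma(\{u>s\})\le (C_{2}'\sigma(K))^{q/p}s^{-q(p-1)/p}$ via the layer-cake formula gives
\[
\int_{K} u^{-(p-1)}\,d\sigma \;\ge\; \frac{q}{p+q}\,\frac{\sigma(K)^{p/q}}{C_{2}'},
\]
and this factor $q/(p+q)$ is attained in the extremal case, so it cannot be removed by this method. Your proposed H\"older decomposition $\sigma(K)=\int_{K}u^{\alpha}u^{-\alpha}\,d\sigma$ with the weak-$L^{r}$ moment bound and $\alpha\to 0^{+}$ does not cure this: the factor $\bigl(\tfrac{r}{r-\alpha a}\bigr)^{1/a}$ coming from the weak-to-strong comparison behaves like $e^{(p-1)/r}=e^{p/q}$ in that limit rather than tending to $1$. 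So the sharpening you sketch does not actually recover the stated constant; the equilibrium-measure argument in the paper is both shorter and sharper.
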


\begin{proof}
Let $K$ be any compact subset of $\Omega$.
Let  $u$ be the $\A$-potential of a condenser $(K, \Omega)$ and let $\nu$ be the Riesz measure of $u$.
Then, by \cite[Corollary 4.8]{MR1386213}, 
\[
\nu(\Omega)
\le
\beta^{p} \alpha^{1 - p}
\capacity_{p, w}(K, \Omega).
\]
Since $u = \w_{\A} \nu \ge 1$ on $K$,
\[
\sigma(K)^{\frac{p}{q}}
\le
\| \w_{\A} \nu \|_{L^{\frac{q(p- 1)}{p}, \infty}(\sigma)}^{p - 1}
\le
C_{2}' \nu(\Omega)
\le
\beta^{p} \alpha^{1 - p}
C_{2}' \capacity_{p, w}(K, \Omega).
\]
Therefore,
\[
\begin{split}
\| f \|_{L^{q, \infty}(\sigma)}^{p}
& =
\sup_{k \ge 0} k^{p} \, \sigma( \{ |f| \ge k \} )^{\frac{p}{q}}
\\
& \le
C_{2}^{p} \sup_{k \ge 0} k^{p} \, \capacity_{p, w}(\{ |f| \ge k \}, \Omega)
\le
C_{2}^{p} \| \nabla f \|_{L^{p}(w)}^{p}.
\end{split}
\]
This completes the proof
\end{proof}

\begin{lemma}\label{lem:derivative}
Let $\nu, \sigma \in \M_{0}^{+}(\Omega)$.
Then,
\[
\left\| \frac{\w_{\A} \nu}{ \w_{\A} \sigma} \right\|_{L^{p - 1, \infty}(\sigma)} \le \nu(\Omega)^{\frac{1}{p - 1}}.
\]
\end{lemma}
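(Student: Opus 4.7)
The plan is to prove the equivalent inequality
\[
t^{p-1}\, \sigma\bigl(\{\w_\A \nu > t\, \w_\A \sigma\}\bigr) \le \nu(\Omega) \qquad \text{for all } t > 0,
\]
by approximating $\sigma, \nu$ by smooth measures, establishing the bound for the approximations via a test function argument exploiting the homogeneity \eqref{eqn:homogenity} and monotonicity \eqref{eqn:monotonicity} of $\A$, and then passing to the limit. Assume $\nu(\Omega) < \infty$ (otherwise the bound is trivial), and write $u := \w_\A \sigma$, $v := \w_\A \nu$.

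By Theorem \ref{thm:approximation}, fix increasing compact sets $F_k, G_k$ so that $\sigma_k := \mathbf{1}_{F_k}\sigma \in S_c(\Omega)$, $\nu_k := \mathbf{1}_{G_k}\nu \in S_c(\Omega)$, with $\sigma_k \uparrow \sigma$ and $\nu_k \uparrow \nu$. Set $u_k := \w_\A \sigma_k$ and $v_k := \w_\A \nu_k$; these lie in $H_0^{1,p}(\Omega; w) \cap L^\infty(\Omega)$ and, by the Harnack-type convergence following Theorem \ref{thm:TW}, increase pointwise to $u$ and $v$. By \eqref{eqn:homogenity}, $tu_k$ is the weak solution with datum $t^{p-1}\sigma_k$.

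The main technical step is to show, for every $t, s > 0$ and every $k$,
\[
t^{p-1}\, \sigma_k\bigl(\{v_k - tu_k \ge s\}\bigr) \le \nu_k(\Omega).
\]
To do so, test the weak formulations for $v_k$ and $tu_k$ against the admissible function $\varphi := \min\bigl((v_k - tu_k)_+,\, s\bigr) \in H_0^{1,p}(\Omega; w) \cap L^\infty(\Omega)$ and subtract. Since $\nabla\varphi$ is supported in $\{0 < v_k - tu_k < s\}$ and equals $\nabla v_k - t\nabla u_k$ there, \eqref{eqn:monotonicity} yields
\[
\int \varphi\, d\nu_k - t^{p-1} \int \varphi\, d\sigma_k = \int_{\{0 < v_k - tu_k < s\}}\!\!\bigl[\A(x, \nabla v_k) - \A(x, t\nabla u_k)\bigr] \cdot (\nabla v_k - t\nabla u_k)\, dx \ge 0.
\]
Combining this with $s\,\mathbf{1}_{\{v_k - tu_k \ge s\}} \le \varphi \le s$ gives the claim; sending $s \to 0^+$ (monotone convergence of the level sets) yields $t^{p-1}\, \sigma_k(\{v_k > tu_k\}) \le \nu_k(\Omega) \le \nu(\Omega)$.

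For the final passage, check pointwise that, for every $\varepsilon > 0$,
\[
\mathbf{1}_{\{v > (1+\varepsilon)tu\}} \le \liminf_{k \to \infty} \mathbf{1}_{\{v_k > tu_k\}},
\]
treating separately $u(x) > 0$ (where $v_k/u_k \to v/u$) and $u(x) = 0$ (where $u_k \equiv 0$ and $v_k \uparrow v > 0$). Fix $k_0$; for $k \ge k_0$ we have $\sigma_{k_0} \le \sigma_k$, so Fatou and the previous step give
\[
\sigma_{k_0}\bigl(\{v > (1+\varepsilon)tu\}\bigr) \le \liminf_k \sigma_k(\{v_k > tu_k\}) \le \nu(\Omega)/t^{p-1}.
\]
Sending $k_0 \to \infty$ (monotone convergence of measures) and then $\varepsilon \to 0^+$ yields $t^{p-1}\, \sigma(\{v > tu\}) \le \nu(\Omega)$, which is the desired weak-type bound. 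The main obstacle I anticipate is this last limit passage: $u$ and $v$ are only lower semicontinuous and typically not in $H_0^{1,p}(\Omega; w)$, so one cannot test directly with them, and $\{v > tu\}$ need not be open. Approximating via the strict sublevel sets $\{v > (1+\varepsilon)tu\}$ together with the monotone pointwise convergence $u_k \uparrow u$, $v_k \uparrow v$ and the ordering $\sigma_{k_0} \le \sigma_k$ for $k \ge k_0$ circumvents this; the remaining steps are routine once the monotonicity identity is in hand.
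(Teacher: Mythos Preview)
Your proposal is correct and follows essentially the same route as the paper: the heart of both arguments is testing the two equations against the truncated positive part $\min\{(v-tu)_{+},s\}$ (the paper writes this as $k^{-1}\min\{(v-u)_{+},k\}$ after absorbing $t$ into $\nu$ via homogeneity) and using \eqref{eqn:monotonicity} to obtain $t^{p-1}\sigma(\{v>tu\})\le\nu(\Omega)$. The paper simply asserts ``without loss of generality $\sigma,\nu\in S_{c}(\Omega)$'' and omits the limit passage you spell out; your Fatou/$\varepsilon$-slack argument is a legitimate way to justify that reduction.
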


\begin{proof}
Without loss of generality, we may assume that $\sigma, \nu \in S_{c}(\Omega)$.
Set $u = \w_{\A} \sigma$ and $v = \w_{\A} \nu_{t}$, where $\nu_{t} = t^{1 - p} \nu$.
For $k > 0$, set $I_{k}( v - u ) = k^{-1} \min\{ (v - u)_{+} , k \}$.
By \eqref{eqn:monotonicity},
\[
\begin{split}
&
\int_{\Omega} I_{k}(v - u) \, d \nu_{t} - \int_{\Omega} I_{k}(v - u) \, d \sigma
\\
& =
\frac{1}{k} \int_{ \{ x \in \Omega: 0 < v(x) - u(x) < k \} } \left( \A(x, \nabla v) - \A(x, \nabla u) \right) \cdot \nabla (v - u) \, dx
\ge
0.
\end{split}
\]
Passing to the limit $k \to 0$ yields
\[
\nu_{t}(\Omega) \ge \sigma( \{ x\in \Omega: v(x) > u(x) \} ).
\]
By \eqref{eqn:homogenity}, $v = t^{-1} \w_{\A} \nu$, and thus,
\[
t^{p - 1} \sigma\left( \left\{ x \in \Omega: \frac{ \w_{\A} \nu(x) }{ \w_{\A} \sigma(x) } > t \right\} \right) \le \nu(\Omega).
\]
Taking the supremum over $t > 0$, we obtain the desired inequality.
\end{proof}


\begin{proof}[Proof of Theorem \ref{thm;weak-COV}]
We first prove the latter inequality for $\sigma \in S_{c}(\Omega)$.
Let $u = \w_{\A} \sigma$.
Then,
\[
\sup_{j \in \Z} 2^{j \frac{q(p - 1)}{p - q} } \sigma(E_{j})
\le
\| u \|_{L^{\frac{q(p - 1)}{p - q}, \infty}(\sigma)}^{ \frac{q(p - 1)}{p - q} }
\le
\sup_{j \in \Z} 2^{(j + 1) \frac{q(p - 1)}{p - q} } \sigma(E_{j}),
\]
where $E_{j} = \{ x \in \Omega \colon u(x) > 2^{j} \}$.
By \eqref{eqn:weak_trace@prop:tr-wni-cap}, we have
\[
\sigma(E_{j})
\le
C_{2}^{q} \capacity_{p, w}(E_{j}, \Omega)^{\frac{q}{p}}
\le
C_{2}^{q} \capacity_{p, w}(E_{j}, E_{j - 1})^{\frac{q}{p}}.
\]
Let $U_{j} =  \min\{ (u - 2^{j - 1})_{+}, 2^{j - 1} \}$.
Then $U_{j} \in H_{0}^{1, p}(E_{j - 1}; w)$, $0 \le U_{j} \le 2^{j - 1}$ in $E_{j - 1}$ and $U_{j} \equiv 2^{j - 1}$ on $E_{j}$.
By \eqref{eqn:coercive},
\[
\begin{split}
\capacity_{p, w}(E_{j}, E_{j - 1})
& \le
2^{(1 - j)p} \int_{\Omega} |\nabla U_{j}|^{p} \, dw
\le
\frac{2^{(1 - j)p}}{\alpha} \int_{\Omega} \A(x, \nabla u) \cdot \nabla U_{j} \, dx
\\
& =
\frac{2^{(1 - j)p}}{\alpha} \int_{\Omega} U_{j} \, d \sigma
\le
\frac{2^{p - 1} 2^{j(1 - p)}}{\alpha} \sigma(E_{j - 1}).
\end{split}
\]
Combining these inequalities, we obtain
\[
\begin{split}
\| u \|_{L^{\frac{q(p - 1)}{p - q}, \infty}(\sigma)}^{ \frac{q(p - 1)}{p - q} }
& \le
2^{\frac{q(p - 1)}{p - q}}
\frac{ C_{2}^{q} }{ \alpha^{ \frac{q}{p} } }
\left(
2^{p - 1}
\sup_{j \in \Z} 2^{j \frac{q(p - 1)}{p - q} } \sigma(E_{j - 1})
\right)^{\frac{q}{p}}
\\
& \le
4^{\frac{q(p - 1)}{p - q}}
\frac{ C_{2}^{q} }{ \alpha^{ \frac{q}{p} } }
\left(
\| u \|_{L^{\frac{q(p - 1)}{p - q}, \infty}(\sigma)}^{ \frac{q(p - 1)}{p - q} }
\right)^{\frac{q}{p}}.
\end{split}
\]
Hence the desired inequality holds.

Let us prove the existence.
Take $\{ \sigma_{k} \}_{k = 1}^{\infty} \subset S_{c}(\Omega)$ such that
$\sigma_{k} = \mathbf{1}_{F_{k}} \sigma$ and $\mathbf{1}_{F_{k}} \uparrow \mathbf{1}_{\Omega}$ $\sigma$-a.e.
By the monotone convergence theorem,
\[
\| \w_{\A} \sigma \|_{L^{\frac{q(p - 1)}{p - q}, \infty}(\sigma)}^{\frac{q(p - 1)}{p}}
\le
\lim_{k \to \infty}
\| \w_{\A} \sigma_{k} \mathbf{1}_{F_{k}} \|_{L^{\frac{q(p - 1)}{p - q}, \infty}(\sigma)}^{\frac{q(p - 1)}{p}}
\le
\frac{ 4^{\frac{q(p - 1)}{p - q}} }{ \alpha^{ \frac{q}{p} } }
C_{2}^{q}.
\]
Thus $\w_{\A} \sigma \not \equiv \infty$ and is $\A$-superharmonic in $\Omega$.

Conversely, assume that $\| \w_{\A} \sigma \|_{L^{\frac{q(p - 1)}{p - q}, \infty}(\sigma)}$ is finite.
By H\"{o}lder's inequality for Lorentz spaces, Lemma \ref{lem:derivative} yields
\[
\begin{split}
\| \w_{\A} \nu \|_{L^{\frac{q(p- 1)}{p}, \infty}(\sigma)}
& \le
\| \w_{\A} \sigma \|_{L^{\frac{q(p - 1)}{p - q}, \infty}(\sigma)}
\left\| \frac{\w_{\A} \nu}{ \w_{\A} \sigma} \right\|_{L^{p - 1, \infty}(\sigma)}
\\
& \le
\| \w_{\A} \sigma \|_{L^{\frac{q(p - 1)}{p - q}, \infty}(\sigma)} \nu(\Omega)^{\frac{1}{p - 1}},
\quad
\forall \nu \in \M_{0}^{+}(\Omega).
\end{split}
\]
From Lemma \ref{lem:weak-wni}, the desired lower bound follows.
\end{proof}

%


To treat the more general $\sigma \in \M_{0}^{+}(\Omega)$, we consider the following relaxed capacitary condition.
A typical example of such a measure is
the sum of a finite measure in $\M_{0}^{+}(\Omega)$ and a measure satisfying \eqref{eqn:trace_ineq}.
Note that it is not clear whether $\w_{\A}( \sigma_{1} + \sigma_{2}) \not \equiv \infty$
even if $\w_{\A} \sigma_{i} \not \equiv \infty$ for $i = 1, 2$.

\begin{proposition}\label{thm:relaxed_cc}
Let $\sigma \in \M_{0}(\Omega)$.
Assume that there exists a constant $C_{3} > 0$ such that
\begin{equation}\label{eqn:relaxed_cc}
\sigma(K) \le C_{3} \left( \capacity_{p, w}(K, \Omega)^{ \frac{q}{p} } + 1 \right), \quad \forall K \Subset \Omega,
\end{equation}
where $0 < q < p$ and $C_{3} > 0$ is a constant.
Then, there exists a minimal nonnegative $\A$-superharmonic solution $u$ to \eqref{eqn:p-laplace}.
\end{proposition}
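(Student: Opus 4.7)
The plan is to show $\w_{\A}\sigma \not\equiv \infty$. Once this is established, the Harnack-type convergence theorem following Theorem~\ref{thm:TW} yields an $\A$-superharmonic solution $u := \w_{\A}\sigma$ to \eqref{eqn:p-laplace} with Riesz measure $\sigma$, and Theorem~\ref{thm:comparison_principle} gives its minimality.

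The approach is by decomposition and comparison. I split $\sigma = \sigma_{1} + \sigma_{2}$ inside $\M^{+}_{0}(\Omega)$ so that $\sigma_{1}$ obeys the pure capacitary bound
\[
\sigma_{1}(K) \le C'\,\capacity_{p, w}(K, \Omega)^{q/p}, \quad K \Subset \Omega,
\]
(for which Theorem~\ref{thm;weak-COV} delivers $\w_{\A}\sigma_{1} \not\equiv \infty$), and $\sigma_{2}$ is a finite measure in $\M^{+}_{0}(\Omega)$ (for which the classical solvability for finite measure data in $\M^{+}_{0}$, underlying the smooth measure framework in \cite{MR4309811}, gives $\w_{\A}\sigma_{2} \not\equiv \infty$). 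Concretely, I take $\sigma_{2}$ to be a maximal element of the family
\[
\{\nu \in \M^{+}_{0}(\Omega) \colon \nu \le \sigma,\ \nu(\Omega) \le C_{3}\},
\]
whose existence follows from Zorn's lemma combined with Theorem~\ref{thm:approximation}. The maximality is designed so that if the residue $\sigma_{1} := \sigma - \sigma_{2}$ exceeded the pure capacitary bound on some compact $K_{0}$, then \eqref{eqn:relaxed_cc} applied to $K_{0}$ would attribute the excess to the ``$+1$'' term, so one could transfer a slice of mass of size at most $C_{3}$ from $\sigma_{1}|_{K_{0}}$ into $\sigma_{2}$, contradicting maximality.

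With the decomposition in hand, the closing step is a Wolff potential comparison: the lower bound $\W^{R}_{1, p, w}\sigma_{i}(x) \le C\,\w_{\A}\sigma_{i}(x)$ from Section~\ref{sec:potentials} combined with the subadditivity $\W^{R}_{1, p, w}(\sigma_{1} + \sigma_{2}) \le C\bigl(\W^{R}_{1, p, w}\sigma_{1} + \W^{R}_{1, p, w}\sigma_{2}\bigr)$ shows that $\W^{R}_{1, p, w}\sigma < \infty$ on a set of positive measure in $\Omega$. Inserting this into the upper Wolff estimate
\[
\w_{\A}\sigma(x) \le C\bigl(\inf_{B(x, R)} \w_{\A}\sigma + \W^{2R}_{1, p, w}\sigma(x)\bigr)
\]
on a ball where $\w_{\A}\sigma$ is not identically infinite concludes the argument. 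The main obstacle I anticipate is the quantitative transfer argument extracting the pure capacitary bound for $\sigma_{1}$: the ``$+1$'' term in \eqref{eqn:relaxed_cc} is scale-invariant and structurally soft, and pinning down a universal constant $C'$ depending only on $C_{3}$ may require an iterative refinement of $\sigma_{2}$ via a Vitali-type covering of the compact sets on which $\sigma_{1}$ would otherwise exceed the target.
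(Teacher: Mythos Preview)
Your proposal has two genuine gaps. The first, which you partly anticipate, is that the Zorn/maximality construction of $\sigma_{2}$ does not yield the pure capacitary bound for $\sigma_{1}$: once $\sigma_{2}(\Omega) = C_{3}$ (as it will be whenever $\sigma(\Omega) > C_{3}$), no further mass transfer is possible, so no contradiction with maximality arises. The ``iterative refinement via Vitali covering'' you allude to is not a routine step and is essentially the whole difficulty; you have not supplied an argument.

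The second gap is more serious and is flagged explicitly in the paper just before the statement of Proposition~\ref{thm:relaxed_cc}: even granting a decomposition $\sigma = \sigma_{1} + \sigma_{2}$ with $\w_{\A}\sigma_{i} \not\equiv \infty$, it is \emph{not} known in general that $\w_{\A}(\sigma_{1} + \sigma_{2}) \not\equiv \infty$, because the nonlinear operator admits no superposition principle. Your attempt to bridge this via Wolff potentials is circular. The upper Wolff estimate you quote applies to an $\A$-superharmonic function in $B(x,2R)$; to invoke it for $\w_{\A}\sigma$ you must already know $\w_{\A}\sigma$ is $\A$-superharmonic, which is precisely the conclusion sought. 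Applying it instead to approximations $u_{k} = \w_{\A}\sigma_{k}$ leaves the term $\inf_{B(x,R)} u_{k}$ uncontrolled uniformly in $k$: this infimum is a global quantity depending on $\sigma_{k}$ throughout $\Omega$, not merely on the local truncated potential, and the local finiteness of $\W^{2R}_{1,p,w}\sigma$ says nothing about it.

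The paper avoids both issues by a completely different, direct route: for $\sigma \in S_{c}(\Omega)$ it estimates $\sigma(E_{j})$, where $E_{j} = \{u > 2^{j}\}$, via \eqref{eqn:relaxed_cc} and the elementary capacity bound $\capacity_{p,w}(E_{j},\Omega) \le C\,2^{j(1-p)}\sigma(E_{j-1})$ (obtained by testing with truncations of $u$). This produces a self-referential recursion
\[
2^{j\frac{q(p-1)}{p-q}}\sigma(E_{j}) \le C\bigl(2^{(j-1)\frac{q(p-1)}{p-q}}\sigma(E_{j-1})\bigr)^{q/p} + C\,2^{j\frac{q(p-1)}{p-q}},
\]
which, after taking a supremum over $j \le 0$ and applying Young's inequality, gives $\sigma(E_{0}) \le C$ with $C$ depending only on $p,q,\alpha,C_{3}$. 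A further truncation-and-test argument then yields $\|(u-1)_{+}\|_{L^{p-1,\infty}(w)} \le C$ uniformly over the approximating family, hence $\w_{\A}\sigma \not\equiv \infty$. No decomposition of $\sigma$ and no additivity of nonlinear potentials is needed.
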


\begin{proof}

We first claim that if $\sigma \in S_{c}(\Omega)$ satisfies \eqref{eqn:relaxed_cc}, then
\begin{equation}\label{eqn:bound@relaxed_cc}
\| (u - 1)_{+} \|_{L^{ p - 1, \infty }(w)} 
\le
C,
\end{equation}
where $C = C(p, q, w, \alpha, C_{3})$.
Let $E_{j} = \{ x \in \Omega \colon u(x) > 2^{j} \}$.
As in the proof of Theorem \ref{thm;weak-COV},
\[
\begin{split}
\sigma(E_{j})
\le
C \capacity_{p, w}(E_{j}, \Omega)^{ \frac{q}{p} } + C
\le
C \left( 2^{j (1 - p)}\sigma(E_{j - 1}) \right)^{ \frac{q}{p} } + C.
\end{split}
\]
Thus,
\[
2^{j \frac{q(p - 1)}{p - q}} \sigma(E_{j})
\le
C \left( 2^{(j - 1) \frac{q(p - 1)}{p - q}} \sigma(E_{j - 1}) \right)^{ \frac{q}{p} } + 2^{j \frac{q(p - 1)}{p - q}} C,
\]
and hence
\[
\begin{split}
&
\max\left\{ \sigma( E_{0} ), \sup_{ j \le -1 } 2^{j \frac{q(p - 1)}{p - q}} \sigma(E_{j}) \right\}
\le
C \left( \sup_{j \le -1} 2^{j \frac{q(p - 1)}{p - q}} \sigma(E_{j}) \right)^{ \frac{q}{p} } + C.
\end{split}
\]
By Young's inequality, $\sigma(E_{0}) \le C$.
Using the test function $\min\{ (u - 1)_{+}, l \}$ ($l > 0$), we obtain
\[
\begin{split}
\alpha \| \nabla \min\{ (u - 1)_{+}, l \} \|_{L^{p}(w)}^{p}
& \le
\int_{\Omega} \A(x, \nabla u) \cdot \nabla \min\{ (u - 1)_{+}, l \} \, dx
\\
& =
\int_{\Omega} \min\{ (u - 1)_{+}, l \} \, d \sigma
\le
l \sigma(E_{0}).
\end{split}
\]
Then, the Poincar\'{e} inequality yields
\[
l^{p} w( \{ x \in \Omega: (u - 1)_{+}(x) \le l \} ) \le C l.
\]
This result implies \eqref{eqn:bound@relaxed_cc}.

Take $\{ \sigma_{k} \}_{k = 1}^{\infty} \subset S_{c}(\Omega)$ such that
$\sigma_{k} = \mathbf{1}_{F_{k}} \sigma$ and $\mathbf{1}_{F_{k}} \uparrow \mathbf{1}_{\Omega}$ $\sigma$-a.e.
Then $\w_{A} \sigma_{k} \uparrow \w_{A} \sigma \not \equiv \infty$ by \eqref{eqn:bound@relaxed_cc}.
Hence, $u := \w_{A} \sigma$ is the desired minimal $\A$-superharmonic solution.
\end{proof}

\section{Applications to singular elliptic problems}\label{sec:app}

In this section, we consider Eq. \eqref{eqn:singular}.
For basics of singular elliptic problems, we refer to
\cite{MR427826, MR1037213, MR1458503, MR1866062, MR2592976, MR4152210} and the references therein.
We first prove the following general existence result.

\begin{theorem}\label{thm:singular}
Let $\sigma \in \M^{+}_{0}(\Omega) \setminus \{ 0 \}$.
Assume that there exists a nonnegative $\A$-superharmonic supersolution $v$ to \eqref{eqn:p-laplace}.
Let $h \colon (0, \infty) \to (0, \infty)$ be a continuously differentiable nonincreasing function.
Then there exists a nonnegative $\A$-superharmonic solution $u$ to \eqref{eqn:singular}
satisfying the Dirichlet boundary condition in the sense that
\begin{equation}\label{eqn:bound_sing}
0 < g(u)(x) \le v(x), \quad \forall x \in \Omega,
\end{equation}
where
\[
g(u) := \int_{0}^{u} \frac{1}{h(t)^{ \frac{1}{p - 1} } } \, dt.
\]
\end{theorem}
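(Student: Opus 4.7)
The plan is to obtain $u$ as the limit of solutions $u_k$ to regularized problems, with the bound $g(u) \le v$ inherited automatically from a pointwise comparison at each finite step. The key mechanism is that the substitution $w = g(u)$ turns \eqref{eqn:singular} into a subsolution inequality for \eqref{eqn:p-laplace}, which $v$ dominates. Indeed, by \eqref{eqn:homogenity},
\[
\A(x,\nabla g(u)) = g'(u)^{p-1}\A(x,\nabla u) = h(u)^{-1}\A(x,\nabla u),
\]
and a formal computation (rigorously: testing the equation for $u$ against $\varphi/h(u)$ and differentiating the product) yields
\[
-\divergence \A(x, \nabla g(u))
= \sigma + \frac{h'(u)}{h(u)^{2}}\, \A(x, \nabla u) \cdot \nabla u
\le \sigma,
\]
since $h' \le 0$ and $\A(x, \nabla u) \cdot \nabla u \ge 0$ by \eqref{eqn:coercive}. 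Comparison with $v$ then forces $g(u) \le v$.

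\textbf{Regularization and construction of $u_k$.} Using Theorem \ref{thm:approximation} take $\sigma_k = \mathbf{1}_{F_k} \sigma \in S_c(\Omega)$ with $\sigma_k \uparrow \sigma$, and regularize $h$ by $h_k(t) := \min\{h(t+1/k), k\}$, which is $C^1$, nonincreasing, bounded, and satisfies $h_k \uparrow h$ on $(0,\infty)$. Set $g_k(u) := \int_0^u h_k(s)^{-1/(p-1)}\, ds$, so that $g_k \ge g$. For each $k$, Schauder's fixed point theorem applied to the compact mapping $T_k(\phi) := \w_\A^{0}(\sigma_k h_k(\phi))$ on the invariant convex set $\{ 0 \le \phi \le \|h_k\|_\infty^{1/(p-1)} \w_\A^0 \sigma_k \}$ produces a nonnegative $u_k \in H_0^{1,p}(\Omega; w) \cap L^\infty(\Omega)$ solving $-\divergence \A(x, \nabla u_k) = \sigma_k h_k(u_k)$ weakly. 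Testing this equation against $\varphi/h_k(u_k) \in H_0^{1,p}(\Omega; w) \cap L^\infty$ (admissible since $h_k(u_k) \ge h_k(\|u_k\|_\infty) > 0$) makes the formal computation rigorous and shows that $g_k(u_k) \in H_0^{1,p}(\Omega; w) \cap L^\infty$ is a weak subsolution to $-\divergence \A(x, \nabla w) = \sigma_k$. Standard $H_0^{1,p}$-comparison then gives $g_k(u_k) \le \w_\A^0 \sigma_k$, and Theorem \ref{thm:comparison_principle} (with $\sigma_k \le \mu[v]$) gives $\w_\A^0 \sigma_k \le v$; chaining these yields $g_k(u_k) \le v$ in $\Omega$.

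\textbf{Limit passage and main obstacle.} Combined with $g_k \ge g$, the bound $g_k(u_k) \le v$ yields $u_k \le g^{-1}(v)$ pointwise, a locally uniform upper bound; a Caccioppoli estimate then supplies a local $H^{1,p}$-bound. Along a subsequence, $u_k \to u$ pointwise a.e.\ and weakly in $H^{1,p}_{\loc}(\Omega; w)$, and after lsc-regularization $u$ is $\A$-superharmonic; Theorem \ref{thm:TW} then gives $\mu[u_k] = \sigma_k h_k(u_k) \rightharpoonup \mu[u]$ weakly. The main obstacle is identifying this weak limit as $\sigma h(u)$: because $h$ may blow up at $0$, one needs $u_k$ bounded away from $0$ on each compact $K \Subset \Omega$, uniformly in $k$. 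I would obtain such a lower bound from the weak Harnack inequality applied to $u_k$, using that the source satisfies $\sigma_k h_k(u_k) \ge \sigma_1 h_k(\|u_k\|_\infty) \ge c\, \sigma_1 > 0$ on $\spt \sigma_1$ for $k$ large. Combined with the upper bound $u_k \le g^{-1}(v)$, dominated convergence identifies $\sigma_k h_k(u_k) \rightharpoonup \sigma h(u)$, so $\mu[u] = \sigma h(u)$ and $-\divergence \A(x, \nabla u) = \sigma h(u)$. Finally, $g(u) \le v$ follows from $g_k(u_k) \le v$ by monotone convergence as $g_k \downarrow g$, while $u \ge u_{k_0} > 0$ in $\Omega$ for any $k_0$ with $\sigma_{k_0} \not\equiv 0$ (via the strong minimum principle for $u_{k_0}$) supplies the strict positivity $g(u) > 0$.
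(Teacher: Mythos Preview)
Your overall strategy mirrors the paper's: regularize, use the transformation $g$ to turn the approximate solution into a subsolution of \eqref{eqn:p-laplace}, compare with $v$, and pass to the limit. The existence step for the regularized problem differs (you use Schauder, the paper uses Minty--Browder), and your regularization $h_k=\min\{h(\cdot+1/k),k\}$ carries an extra cap, but these are cosmetic. The substantive divergence is in the limit passage, and there your argument has a genuine gap.

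The paper first proves $u_{k+1}\ge u_k$ for all $k$ by testing the two approximate equations against $(u_k-u_{k+1})_+$ and using $\sigma_k\le\sigma_{k+1}$ together with the nonincrease of the nonlinearity. This monotonicity drives the whole endgame: the full sequence increases to $u$, the Harnack-type convergence theorem immediately gives that $u$ is $\A$-superharmonic, and the uniform local lower bound is simply $u_k\ge u_1\ge c>0$ on $\spt\varphi$, obtained by applying the weak Harnack inequality to the \emph{single} function $u_1$. You omit monotonicity and try to argue along a subsequence, and this is where things break.

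Your lower-bound step asserts $\sigma_k h_k(u_k)\ge \sigma_1 h_k(\|u_k\|_\infty)\ge c\,\sigma_1$ for large $k$. For this you need $h_k(\|u_k\|_\infty)$ bounded below uniformly in $k$, hence a \emph{uniform} bound on $\|u_k\|_\infty$ (or at least on $\sup_{\spt\sigma_1}u_k$). Your only upper bound is $u_k\le g^{-1}(v)$, but $v$ is merely $\A$-superharmonic and need not be locally bounded above (it may be $+\infty$ on a polar set inside $\spt\sigma_1$), so this does not give what you need; the argument is circular. The same missing monotonicity undermines the positivity claim $u\ge u_{k_0}$ at the end (a subsequential a.e.\ limit has no reason to dominate a fixed term), and the assertion that the limit is $\A$-superharmonic after lsc-regularization requires justification beyond weak $H^{1,p}_{\loc}$ convergence.

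The fix is the paper's: your regularization also satisfies $h_k\le h_{k+1}$ and $\sigma_k\le\sigma_{k+1}$, and the approximate solution is unique by the standard test against $u_k-u_{k+1}$; hence $u_k\uparrow u$. Once you insert this, every gap above closes and your proof becomes essentially the paper's. (Minor point: $h_k=\min\{h(\cdot+1/k),k\}$ is Lipschitz but not $C^1$ at the transition; this does not affect the test-function computation.)
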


\begin{remark}
By assumption, $g$ is a convex increasing function.
In particular, $\lim_{t \to \infty} g(t) = \infty$.
Use of this type transformation can be found in \cite{MR1490771, MR1866062, MR3829750}.
\end{remark}

To prove Theorem \ref{thm:singular}, we use the following approximating problems:
\begin{equation}\label{eqn:approximate}
\begin{cases}
\displaystyle
- \divergence \A(x, \nabla u_{k}) = \sigma_{k} h \left(u_{k} + \frac{1}{k} \right) & \text{in} \ \Omega, \\
u = 0 & \text{on} \ \partial \Omega,
\end{cases}
\end{equation}
where $k \in \N$, $\sigma_{k} := \mathbf{1}_{F_{k}} \sigma$ and $\{ F_{k} \}_{k = 1}^{\infty}$ is a sequence of compact sets in Theorem \ref{thm:approximation}.
We may assume that $\sigma_{1} \neq 0$ without loss of generality.

If $\{ u_{k} \}_{k = 1}^{\infty} \subset H_{0}^{1, p}(\Omega; w)$ is a sequence of weak solutions to \eqref{eqn:approximate},
then $u_{k + 1} \ge u_{k}$ a.e. in $\Omega$ for all $k \ge 1$. In fact, using the test function $(u_{k} - u_{k + 1})_{+} \in H_{0}^{1, p}(\Omega; w)$, we have
\[
\begin{split}
&
\int_{\Omega} \left( \A(x, \nabla u_{k}) - \A(x, \nabla u_{k + 1}) \right)  \cdot \nabla (u_{k} - u_{k + 1})_{+} \, dx
\\
& =
\int_{\Omega} (u_{k} - u_{k + 1})_{+} \, h\left( u_{k} + \frac{1}{k} \right)  \, d \sigma_{k}
-
\int_{\Omega} (u_{k} - u_{k + 1})_{+} \, h\left( u_{k} + \frac{1}{k + 1} \right)  \, d \sigma_{k + 1}
\\
& \le
\int_{\Omega} (u_{k} - u_{k + 1})_{+} \left\{  h\left( u_{k} + \frac{1}{k + 1} \right) - h\left( u_{k + 1} + \frac{1}{k + 1} \right) \right\} \, d \sigma_{k + 1}
\le 0.
\end{split}
\]
By \eqref{eqn:monotonicity}, $\nabla u_{k} = \nabla u_{k + 1}$ a.e. in $\{ x \in \Omega \colon u_{k}(x) > u_{k + 1}(x) \}$,
and thus, $(u_{k} - u_{k + 1})_{+} = 0$ in $H_{0}^{1, p}(\Omega; w)$.

It is well-known that the singular problem has a convex structure,
so we use the Minty-Browder theorem (see, e.g., \cite[Corollary III.1.8]{MR567696} and \cite{MR212631}).
The same approach can be found in \cite{MR3579130}.

\begin{lemma}
There exists a nonnegative weak solution $u_{k} \in H_{0}^{1, p}(\Omega; w)$ to \eqref{eqn:approximate}.
\end{lemma}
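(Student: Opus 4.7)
The plan is to realize \eqref{eqn:approximate} as the equation $T_{k}(u_{k}) = 0$ for a suitable operator $T_{k}$ on $H_{0}^{1, p}(\Omega; w)$ that is monotone, hemicontinuous and coercive, apply the Minty-Browder theorem to produce a weak solution, and then verify nonnegativity a posteriori by a truncation test. To make the zeroth-order term well-behaved for sign-changing functions, I would first extend the nonlinearity by setting $\tilde h_{k}(s) := h( s_{+} + 1/k )$ with $s_{+} := \max(s, 0)$; because $h$ is continuous, positive and nonincreasing on $(0, \infty)$, the function $\tilde h_{k}$ is a continuous, positive, nonincreasing function on $\R$ bounded by $h(1/k)$. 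Then define $T_{k} \colon H_{0}^{1, p}(\Omega; w) \to (H_{0}^{1, p}(\Omega; w))^{*}$ by
\[
\langle T_{k}(u), \varphi \rangle
:=
\int_{\Omega} \A(x, \nabla u) \cdot \nabla \varphi \, dx
- \int_{\Omega} \tilde h_{k}(\tilde u) \tilde \varphi \, d \sigma_{k},
\]
where tildes denote $(p, w)$-quasicontinuous representatives. Since $\sigma_{k} \in S_{c}(\Omega) \subset (H_{0}^{1, p}(\Omega; w))^{*} \cap \M_{0}^{+}(\Omega)$ and $\tilde h_{k}$ is bounded, the second term is a continuous linear functional of $\varphi$, with norm at most $h(1/k) \| \sigma_{k} \|_{(H_{0}^{1, p}(\Omega; w))^{*}}$.

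Next I would verify the three Minty-Browder hypotheses. Monotonicity of the $\A$-part is \eqref{eqn:monotonicity}, while the lower-order part gives $\int_{\Omega} (\tilde h_{k}(u) - \tilde h_{k}(v))(u - v) \, d \sigma_{k} \le 0$ because $\tilde h_{k}$ is nonincreasing, whence $\langle T_{k}(u) - T_{k}(v), u - v \rangle \ge 0$. Hemicontinuity of the $\A$-part is classical; for the lower-order part the map $t \mapsto \int_{\Omega} \tilde h_{k}(u + t v) \varphi \, d \sigma_{k}$ is continuous by dominated convergence with dominant $h(1/k) |\tilde \varphi|$. Finally, by \eqref{eqn:coercive} and Poincar\'e,
\[
\langle T_{k}(u), u \rangle \ge \alpha \| \nabla u \|_{L^{p}(w)}^{p} - h(1/k) \| \sigma_{k} \|_{(H_{0}^{1, p}(\Omega; w))^{*}} \| \nabla u \|_{L^{p}(w)},
\]
which is coercive since $p > 1$. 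The Minty-Browder theorem then furnishes $u_{k} \in H_{0}^{1, p}(\Omega; w)$ with $T_{k}(u_{k}) = 0$.

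To conclude, I would test the equation against $-(u_{k})_{-} \in H_{0}^{1, p}(\Omega; w)$. The left-hand side equals $\int_{\{u_{k} < 0\}} \A(x, \nabla u_{k}) \cdot \nabla u_{k} \, dx \ge \alpha \| \nabla (u_{k})_{-} \|_{L^{p}(w)}^{p}$ by \eqref{eqn:coercive}, while the right-hand side equals $-\int_{\Omega} \tilde h_{k}(\tilde u_{k}) (\widetilde{u_{k}})_{-} \, d \sigma_{k} \le 0$, forcing $(u_{k})_{-} \equiv 0$ and hence $u_{k} \ge 0$. Consequently $\tilde h_{k}(\tilde u_{k}) = h(\tilde u_{k} + 1/k)$ and $u_{k}$ is a weak solution of \eqref{eqn:approximate}. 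The main subtlety will be making precise sense of $\tilde h_{k}(\tilde u) \sigma_{k}$ as an element of $(H_{0}^{1, p}(\Omega; w))^{*}$ and verifying its continuous dependence on $u$ through quasicontinuous representatives; this is exactly where the hypotheses $\sigma_{k} \in S_{c}(\Omega) \cap \M_{0}^{+}(\Omega)$ (finiteness plus absolute continuity with respect to $(p, w)$-capacity) combine with the boundedness of $\tilde h_{k}$ to make everything work.
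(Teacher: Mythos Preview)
Your proof is correct and follows essentially the same route as the paper: both extend $h$ to $\tilde h_{k}(s)=h(s_{+}+1/k)$, verify monotonicity, continuity and coercivity of the resulting operator, and invoke the Minty--Browder theorem. The only cosmetic differences are that the paper checks sequential (demi)continuity via a subsequence argument rather than hemicontinuity, and deduces $u_{k}\ge 0$ from the supersolution property (hence $u_{k}\ge 0$ q.e.) instead of your direct test with $-(u_{k})_{-}$; both variants are standard and equivalent here.
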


\begin{proof}
Set $V = H_{0}^{1, p}(\Omega; w)$. For $u \in V$, we define
\[
A(u) := - \divergence \A(x, \nabla u) - \sigma_{k} h \left( u_{+} + \frac{1}{k} \right).
\]
We apply the Minty-Browder theorem to $A$.
Since $\sigma_{k} \in S_{c}(\Omega) \subset (H_{0}^{1, p}(\Omega))^{*}$,
\[
\begin{split}
\left| \int_{\Omega} \varphi \, h \left( u_{+} + \frac{1}{k} \right) \, d \sigma_{k} \right|
& \le
h \left( \frac{1}{k} \right) \int_{\Omega} |\varphi| \, d \sigma_{k}
\le
h \left( \frac{1}{k} \right) \| \sigma_{k} \|_{V^{*}} \| \nabla \varphi \|_{L^{p}(\Omega)}
\end{split}
\]
for all $\varphi \in V$.
Thus, $A$ is a bounded operator from $V$ to the dual $V^{*}$ of $V$.
Moreover,
\[
\frac{\langle A(u), u \rangle}{\| u \|_{V}} \to \infty
\quad \text{as} \quad \| u \|_{V} \to \infty.
\]
Since $h$ is nonincreasing,
\[
\int_{\Omega} (u - v) \, \left\{ h\left(u_{+} - \frac{1}{k} \right) - h\left(v_{+} - \frac{1}{k} \right) \right\} \, d \sigma_{k} \le 0, \quad \forall u, v \in V,
\]
and hence,
\[
\langle A(u) - A(v), u - v \rangle \ge 0, \quad \forall u, v \in V.
\]
Finally, we claim that if $\{ u_{i} \}_{i = 1}^{\infty} \subset V$ and $u_{i} \to u$ in $V$,
then for any $\varphi \in V$,
\begin{equation}\label{eqn:conv1}
\int_{\Omega} \A(x, \nabla u_{i}) \cdot \nabla \varphi \, dx \to \int_{\Omega} \A(x, \nabla u) \cdot \nabla \varphi \, dx 
\end{equation}
and
\begin{equation}\label{eqn:conv2}
\int_{\Omega} \varphi \, h \left( (u_{i})_{+} + \frac{1}{k} \right) \, d \sigma_{k}
\to
\int_{\Omega} \varphi \, h \left( u_{+} + \frac{1}{k} \right) \, d \sigma_{k}.
\end{equation}
The proof of \eqref{eqn:conv1} is standard (see \cite[Proposition 17.2]{MR2305115}).
Let $\{ u_{i_{j}} \}_{j = 1}^{\infty}$ be any subsequence of $\{ u_{i} \}_{i = 1}^{\infty}$.
Since $\sigma_{k} \in S_{c}(\Omega)$, the embedding $V \hookrightarrow L^{1}(\Omega; \sigma_{k})$ is continuous,
and hence $u_{i_{j}} \to u$ in $L^{1}(\Omega; \sigma_{k})$.
We choose a subsequence $\{ u_{i_{j'}} \}_{j' = 1}^{\infty}$ of $\{ u_{i_{j}} \}_{j = 1}^{\infty}$ such that
$u_{i_{j'}} \to u$ $\sigma_{k}$-a.e.
Since $h$ is continuous and nonincreasing, by the dominated convergence theorem, 
\[
\int_{\Omega} \varphi \, h \left( (u_{i_{j'}})_{+} + \frac{1}{k} \right) \, d \sigma_{k}
\to
\int_{\Omega} \varphi \, h \left( u_{+} + \frac{1}{k} \right) \, d \sigma_{k}.
\]
The right hand side is independent of the choice of $\{ u_{i_{j}} \}_{j = 1}^{\infty}$, and hence \eqref{eqn:conv2} holds.
Consequently, the map $A \colon V \to V^{*}$ is onto.
In particular, there exists a unique $u \in V$ such that $A(u) = 0$.
Since $u$ is a supersolution to $- \divergence \A(x, \nabla u) = 0$ in $\Omega$, $u \ge 0$ q.e. in $\Omega$.
Thus, $u = u_{+}$ $\sigma_{k}$-a.e. in $\Omega$ and satisfies \eqref{eqn:approximate}.
\end{proof}

\begin{lemma}\label{lem:bound_for_approximate_sol}
Let $u_{k} \in H_{0}^{1, p}(\Omega; w)$ be an lsc-regularized weak solution to \eqref{eqn:approximate}.
Then $0 < g(u_{k}) \le \w_{\A} \sigma_{k}$ in $\Omega$.
\end{lemma}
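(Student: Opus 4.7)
The plan is to establish the bound $g(u_k) \le \w_\A \sigma_k$ by showing that a slight modification of $g(u_k)$ is a weak subsolution of $-\divergence \A(x, \nabla v) = \sigma_k$, and then invoking weak comparison against $\w_\A \sigma_k$. First I record two preliminary facts. Since $h$ is nonincreasing, $h(u_k + 1/k) \le h(1/k)$, so the Riesz measure of $u_k$ is dominated by $h(1/k) \sigma_k \in S_c(\Omega)$; Theorem \ref{thm:comparison_principle} combined with \eqref{eqn:homogenity} then yields $u_k \le h(1/k)^{1/(p - 1)} \w_\A \sigma_k \in L^\infty(\Omega)$. Moreover, since $\sigma_k \not\equiv 0$ for each $k$, the strong maximum principle for $\A$-superharmonic functions forces $u_k > 0$ throughout $\Omega$, giving $g(u_k) > 0$ in $\Omega$.

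The naive attempt of testing \eqref{eqn:approximate} with $h(u_k)^{-1} \varphi$ fails because $h(u_k)^{-1}$ need not stay bounded near $\{ u_k = 0 \}$. To sidestep this I introduce the shifted transform
\[
\tilde{g}_k(s) := \int_0^s h(t + 1/k)^{-1/(p - 1)} \, dt
\]
and set $v_k := \tilde{g}_k(u_k)$. The monotonicity of $h$ gives $\tilde{g}_k \ge g$, so it suffices to prove $v_k \le \w_\A \sigma_k$. The boundedness of $u_k$ and the Lipschitz regularity of $\tilde{g}_k$ on compact subintervals ensure $v_k \in H_0^{1, p}(\Omega; w)$, and the chain rule together with \eqref{eqn:homogenity} gives
\[
\A(x, \nabla v_k) = h(u_k + 1/k)^{-1} \A(x, \nabla u_k) \quad \text{a.e.\ in } \Omega.
\]

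The key step is then to test \eqref{eqn:approximate} with $\psi := h(u_k + 1/k)^{-1} \varphi$ for $\varphi \in C_c^\infty(\Omega)$, $\varphi \ge 0$; the $L^\infty$ bound on $u_k$ makes $\psi$ an admissible element of $H_0^{1, p}(\Omega; w)$. Expanding $\nabla \psi$ by the product rule and inserting it into the equation produces
\[
\int_\Omega \varphi \, d\sigma_k = \int_\Omega \A(x, \nabla v_k) \cdot \nabla \varphi \, dx - \int_\Omega \frac{h'(u_k + 1/k)}{h(u_k + 1/k)^2} \, \A(x, \nabla u_k) \cdot \nabla u_k \, \varphi \, dx.
\]
Since $h' \le 0$ and $\A(x, \nabla u_k) \cdot \nabla u_k \ge 0$ by \eqref{eqn:coercive}, the second integral is $\le 0$, which yields $\int_\Omega \A(x, \nabla v_k) \cdot \nabla \varphi \, dx \le \int_\Omega \varphi \, d\sigma_k$; that is, $v_k$ is a weak subsolution of $-\divergence \A(x, \nabla v) = \sigma_k$.

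The weak comparison principle, applied by using $(v_k - \w_\A \sigma_k)_+ \in H_0^{1, p}(\Omega; w)$ as a test function in both equations and invoking \eqref{eqn:monotonicity}, then forces $v_k \le \w_\A \sigma_k$ a.e.\ in $\Omega$. Because $\tilde{g}_k$ is continuous and strictly increasing while $u_k$ is already lsc-regularized, $v_k = \tilde{g}_k(u_k)$ is also lsc-regularized, and $\w_\A \sigma_k$ is lsc-regularized by definition, so the a.e.\ inequality extends pointwise. Combined with $g \le \tilde{g}_k$ and the positivity noted above, this gives $0 < g(u_k) \le \w_\A \sigma_k$ in $\Omega$. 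The main delicate point is the choice of shift by $1/k$: it is what simultaneously makes $\psi$ an admissible test function, exposes the favorable sign of the $h'$ term, and ensures $v_k \in H_0^{1, p}(\Omega; w)$ so that the comparison is applicable.
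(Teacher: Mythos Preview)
Your proof is correct and follows the same overall strategy as the paper: show that a transform of $u_k$ is a weak subsolution of $-\divergence \A(x,\nabla v)=\sigma_k$ and then compare with $\w_\A\sigma_k$. The differences are purely technical. The paper works directly with $g(u_k)$ and handles the possible failure of $1/h$ to be Lipschitz near $0$ by testing with $(1/h(u_k)-\epsilon)_+\varphi$ and letting $\epsilon\to 0$; you instead use the shifted transform $\tilde g_k$, which matches the $1/k$ already present in \eqref{eqn:approximate}, so that $h(u_k+1/k)^{-1}\varphi$ is admissible outright and no limit is needed. For the step from an a.e.\ inequality to a pointwise one, the paper notes that $g^{-1}$ is concave increasing, so $g^{-1}(\w_\A\sigma_k)$ is $\A$-superharmonic, and applies the pointwise comparison of $\A$-superharmonic functions; your lsc-regularization argument reaches the same conclusion. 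Both routes are short, and yours is arguably a bit cleaner since it exploits the structure of the approximating equation directly.
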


\begin{proof}
By the comparison principle for weak solutions,
\[
0 \le u_{k}(x) \le \w_{\A} \left( h \left( \frac{1}{k} \right) \sigma_{k} \right)(x), \quad \forall x \in \Omega.
\]
Since $\sigma_{k} \in S_{c}(\Omega)$, $u_{k} \in H_{0}^{1, p}(\Omega; w) \cap L^{\infty}(\Omega)$.
Fix a nonnegative function $\varphi \in C_{c}^{\infty}(\Omega)$.
By assumption, the function $t \mapsto \left( \frac{1}{h(t)} - \epsilon \right)_{+} $ ($\epsilon > 0$) is nondecreasing and locally Lipschitz.
Consider the test function $\left( \frac{1}{h(u_{k})} - \epsilon \right)_{+} \varphi \in H_{0}^{1, p}(\Omega; w)$.
Since
\[
\int_{\Omega} \A(x, \nabla u_{k}) \cdot \nabla \left( \frac{1}{h(u_{k})} - \epsilon \right)_{+} \varphi \, dx \ge 0,
\]
we have
\[
\begin{split}
&
\int_{\Omega} \A(x, \nabla u_{k}) \cdot \nabla \varphi \, \left( \frac{1}{h(u_{k})} - \epsilon \right)_{+} \, dx
\\
& \le
\int_{\Omega}
\left( \frac{1}{h(u_{k})} - \epsilon \right)_{+} \varphi \, h \left( u_{k} + \frac{1}{k} \right)
\, d \sigma_{k}
\le
\int_{\Omega} \varphi \, d \sigma_{k}.
\end{split}
\]
By \eqref{eqn:homogenity} and the dominated convergence theorem,
\[
\begin{split}
\int_{\Omega} \A(x, \nabla g(u_{k})) \cdot \nabla \varphi \, dx
=
\int_{\Omega} \A(x, \nabla u_{k}) \cdot \nabla \varphi \, \frac{1}{ h(u_{k}) } \, dx
\le
\int_{\Omega} \varphi \, d \sigma_{k}.
\end{split}
\]
By the comparison principle for weak solutions, $0 < g(u_{k}) \le \w_{\A} \sigma_{k}$ a.e. in $\Omega$.
In other words,  $0 <u_{k} \le g^{-1}( \w_{\A} \sigma_{k} )$ a.e. in $\Omega$, where $g^{-1}$ is the inverse function of $g$.
Since $g^{-1}$ is concave and increasing, $g^{-1}( \w_{\A} \sigma_{k} )$ is $\A$-superharmonic in $\Omega$,
and thus, the same inequality holds for all $x \in \Omega$.
\end{proof}

\begin{proof}[Proof of Theorem \ref{thm:singular}]
Let $\{ u_{k} \}_{k = 1}^{\infty}$ be the sequence of lsc-regularized weak solutions to \eqref{eqn:approximate}.
By Lemma \ref{lem:bound_for_approximate_sol},
\[
0 < g( u_{k} ) \le \w_{\A} \sigma_{k} \le \w_{\A} \sigma \le v
\quad \text{in} \ \Omega.
\]
Thus, $u(x) := \lim_{k \to \infty} u_{k}(x)$ is not identically infinite.
By Theorem \ref{thm:TW}, $\mu[u_{k}]$ converges to $\mu[u]$ weakly.
Fix $\varphi \in C_{c}^{\infty}(\Omega)$.
By the weak Harnack inequality, 
there exists a constant $c$ such that $u_{k} \ge u_{1} \ge c > 0$ on $\spt \varphi$.
By the dominated convergence theorem,
\[
\lim_{k \to \infty}
\int_{\Omega} \varphi \, h \left(u_{k} + \frac{1}{k} \right) \, d \sigma_{k}
=
\int_{\Omega} \varphi \, h(u) \, d \sigma.
\]
Thus, $u$ satisfies \eqref{eqn:singular} in the sense of $\A$-superharmonic solutions.
\end{proof}

The following existence result was established by Boccardo and Orsina \cite{MR2592976} for unweighted equations.
The necessity part seems to be new.

\begin{corollary}\label{cor:BO}
Suppose that $\sigma \in \M^{+}_{0}(\Omega) \setminus \{ 0 \}$.
Set $h(u) = u^{- \gamma}$, where $0 < \gamma < \infty$.
Then there exists an $\A$-superharmonic solution $u$ to \eqref{eqn:singular}
satisfying $u^{\frac{p - 1 + \gamma}{p}} \in H_{0}^{1, p}(\Omega; w)$
if and only if $\sigma$ is finite.
\end{corollary}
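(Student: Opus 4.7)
The function $g$ in Theorem~\ref{thm:singular} with $h(t) = t^{-\gamma}$ is $g(t) = \frac{p - 1}{p - 1 + \gamma} t^{(p - 1 + \gamma)/(p - 1)}$, so $g(u)$ is proportional to $u^{(p - 1 + \gamma)/(p - 1)}$. Put $\theta := (p - 1 + \gamma)/p$; then the condition $u^{(p - 1 + \gamma)/p} \in H_{0}^{1, p}(\Omega; w)$ reads $v := u^{\theta} \in H_{0}^{1, p}(\Omega; w)$, and the key chain-rule identity is $|\nabla u^{\theta}|^{p} = \theta^{p} u^{\gamma - 1} |\nabla u|^{p}$.

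For the sufficiency direction, my plan is to first note that a finite $\sigma \in \M_{0}^{+}(\Omega)$ admits a minimal $\A$-superharmonic solution to \eqref{eqn:p-laplace} (this is classical nonlinear potential theory, so $\w_{\A} \sigma \not\equiv \infty$), and then to apply Theorem~\ref{thm:singular} with supersolution $\w_{\A} \sigma$ to produce an $\A$-superharmonic solution $u$ to \eqref{eqn:singular} satisfying $g(u) \le \w_{\A} \sigma$. The extra Sobolev regularity will come from a Caccioppoli-type argument on the approximate weak solutions $u_{k}$ to \eqref{eqn:approximate}: I would test the weak form with $\varphi_{\epsilon} := (u_{k} + \epsilon)^{\gamma} - \epsilon^{\gamma} \in H_{0}^{1, p}(\Omega; w)$, use monotonicity of $h$ to majorize $(u_{k} + 1/k)^{-\gamma} \le u_{k}^{-\gamma}$ on the right, and use coercivity \eqref{eqn:coercive} on the left. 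Sending $\epsilon \to 0$ and applying the chain rule yields
\[
\gamma \alpha \, \theta^{-p} \int_{\Omega} |\nabla u_{k}^{\theta}|^{p} \, dw
\le
\int_{\Omega} u_{k}^{\gamma} u_{k}^{-\gamma} \, d \sigma_{k}
=
\sigma_{k}(F_{k})
\le
\sigma(\Omega),
\]
a uniform bound which, since $u_{k} \uparrow u$ by the monotonicity observation preceding Lemma~\ref{lem:bound_for_approximate_sol}, passes to the limit via monotone convergence and weak compactness in $H_{0}^{1, p}(\Omega; w)$.

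For necessity, assuming $v = u^{\theta} \in H_{0}^{1, p}(\Omega; w)$, I would use a truncation of $u^{\gamma}$ as a test function against the equation $\mu[u] = u^{-\gamma} \sigma$. For $l > 0$, let $u_{l} = \min(u, l)$ and $v_{l} = \min(v, l^{\theta}) = u_{l}^{\theta}$. The standard identification $\mu[u_{l}] \ge \mathbf{1}_{\{u < l\}} \mu[u]$ together with $u_{l}^{\gamma} u^{-\gamma} = 1$ on $\{u < l\}$ yields, after plugging an admissible realization of $u_{l}^{\gamma}$ into the weak form of $- \divergence \A(x, \nabla u_{l}) = \mu[u_{l}]$ and using \eqref{eqn:growth} together with $|\nabla v_{l}|^{p} \le |\nabla v|^{p}$,
\[
\sigma(\{u < l\})
\le
\gamma \int_{\Omega} \A(x, \nabla u_{l}) \cdot \nabla u_{l} \, u_{l}^{\gamma - 1} \, dx
\le
\gamma \beta \, \theta^{-p} \| \nabla v \|_{L^{p}(\Omega; w)}^{p}.
\]
Since $\{u = \infty\}$ has $(p, w)$-capacity zero and $\sigma \in \M_{0}^{+}(\Omega)$, one has $\sigma(\Omega) = \lim_{l \to \infty} \sigma(\{u < l\}) < \infty$.

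The main obstacle will be to realize, for each $\gamma > 0$, a truncation of $u_{l}^{\gamma}$ as an admissible test function in $H_{0}^{1, p}(\Omega; w)$. Here the argument splits by the size of $\gamma$: when $\gamma \ge 1$, the exponent $p \gamma / (p - 1 + \gamma) \ge 1$ makes $u_{l}^{\gamma} = v_{l}^{p \gamma / (p - 1 + \gamma)}$ a Lipschitz function of the bounded $H_{0}^{1, p}$-function $v_{l}$ and hence directly admissible; when $\gamma < 1$, instead $p / (p - 1 + \gamma) \ge 1$, so $u_{l} = v_{l}^{p / (p - 1 + \gamma)}$ lies in $H_{0}^{1, p}(\Omega; w)$ and one uses $(u_{l} + \epsilon)^{\gamma} - \epsilon^{\gamma}$ as a Lipschitz regularization, passing $\epsilon \to 0$ by monotone convergence on both sides (using that $u > 0$ from the weak Harnack inequality so that $u^{-\gamma}$ is well-defined on $\{u < l\}$).
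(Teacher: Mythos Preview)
Your sufficiency argument is correct and is essentially the same Caccioppoli computation that underlies the upper bound in \eqref{eqn:energy_bound}; the paper just packages it by invoking Theorem~\ref{thm:main_theorem} applied to the measure $\mu = u^{-\gamma}\sigma$ with $q = p\gamma/(p-1+\gamma)$, which gives the two-sided estimate $\sigma(\Omega)=\int (\w_{\A}\mu)^{\gamma}\,d\mu \approx \|\nabla u^{\theta}\|_{L^{p}(w)}^{p}$ in one stroke. One small point: to get the right-hand side cleanly bounded by $\sigma_{k}(\Omega)$ you should take $\epsilon \le 1/k$ and compare $(u_{k}+1/k)^{-\gamma}\le(u_{k}+\epsilon)^{-\gamma}$ rather than $u_{k}^{-\gamma}$.

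Your necessity argument, however, has a genuine gap when $\gamma > 1$. You correctly observe that $u_{l}^{\gamma}=v_{l}^{p\gamma/(p-1+\gamma)}\in H_{0}^{1,p}(\Omega;w)$, but ``admissible as a test function'' requires more: the weak identity $\int \A(x,\nabla u_{l})\cdot\nabla\phi\,dx=\int\phi\,d\mu[u_{l}]$ is only known a priori for compactly supported $\phi$, and extending it to $\phi\in H_{0}^{1,p}$ needs the functional $\phi\mapsto\int\A(x,\nabla u_{l})\cdot\nabla\phi\,dx$ to be continuous on $H_{0}^{1,p}$, i.e.\ $\nabla u_{l}\in L^{p}(\Omega;w)$. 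When $\gamma>1$ one has $\theta>1$, so $u_{l}=v_{l}^{1/\theta}$ with exponent $<1$; the hypothesis $v_{l}\in H_{0}^{1,p}$ only gives $\int u_{l}^{\gamma-1}|\nabla u_{l}|^{p}\,dw<\infty$, which does not control $\int|\nabla u_{l}|^{p}\,dw$ near $\{u_{l}\to 0\}$. (Your argument \emph{is} complete for $\gamma\le 1$, since then $u_{l}\in H_{0}^{1,p}$ and $\mu[u_{l}]\in(H_{0}^{1,p})^{*}$.)

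The paper sidesteps this by working at the level of approximations $\mu_{k}=\mathbf{1}_{F_{k}}\mu\in S_{c}(\Omega)$ and invoking the Picone inequality (Lemma~\ref{lem:picone}): for $\tilde u_{k}=\w_{\A}\mu_{k}\in H_{0}^{1,p}\cap L^{\infty}$ one has $\int|f|^{p}\tilde u_{k}^{1-p}\,d\mu_{k}\le C\|\nabla f\|_{L^{p}(w)}^{p}$, and plugging $f=v=u^{\theta}$ together with $\tilde u_{k}\le u$ gives $\int u^{\gamma}\,d\mu_{k}\le C\|\nabla v\|_{L^{p}(w)}^{p}$, hence $\sigma(\Omega)<\infty$ after $k\to\infty$. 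This is exactly what Theorem~\ref{thm:main_theorem} encodes, and it avoids any global integrability issue for $\nabla u_{l}$.
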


\begin{proof}
Assume that $\sigma$ is finite. 
Then $\w_{\A} \sigma \not \equiv \infty$ by \cite[Theorem 6.6]{MR1386213}.
Thus, the existence of $u = \w_{A}(u^{- \gamma} \sigma)$ follows from Theorem \ref{thm:singular}.
Set $\mu = u^{- \gamma} \sigma$. 
By \eqref{eqn:energy_bound} and \eqref{eqn:COV-bound},
\[
\int_{\Omega} \, d \sigma 
=
\int_{\Omega} u^{\gamma} u^{- \gamma} \, d \sigma
=
\int_{\Omega} \left( \w_{\A} \mu \right)^{\gamma} \, d \mu
\approx_{\alpha, \beta}
\int_{\Omega} |\nabla u^{ \frac{p - 1 + \gamma}{p} } |^{p} \, d w.
\]
The necessity part follows from this two-sided estimate directly.
\end{proof}

\begin{remark}\label{rem:uniqueness}
If there exists a finite energy solution $u \in H_{0}^{1, p}(\Omega; w)$ to \eqref{eqn:singular}, then
\begin{equation}\label{eqn:embedding_sing}
\begin{split}
\int_{\Omega} \varphi \, h(u) \, d \sigma
& =
\int_{\Omega} \A(x, \nabla u) \cdot \nabla \varphi \, dx
\\
& \le
\beta \left( \int_{\Omega} |\nabla u|^{p} \, d w \right)^{\frac{p - 1}{p}}
\left( \int_{\Omega} |\nabla \varphi|^{p} \, d w \right)^{\frac{1}{p}},
\quad \forall \varphi \in C_{c}^{\infty}(\Omega).
\end{split}
\end{equation}
Thus, the embedding $H_{0}^{1, p}(\Omega; w) \hookrightarrow L^{1}(\Omega; h(u) \, \sigma)$ is continuous.
Furthermore, such a solution is unique in $H_{0}^{1, p}(\Omega; w)$.
In fact, if $v \in H_{0}^{1, p}(\Omega; w)$ is another solution, then
\[
\int_{\Omega} \left( \A(x, \nabla u) - \A(x, \nabla v) \right)  \cdot \nabla (u - v) \, dx
=
\int_{\Omega} (u - v) \left( h(u) -h(v) \right) \, d \sigma \le 0.
\]
Hence $\nabla u = \nabla v$ a.e. in $\Omega$ and $u = v$ in $H_{0}^{1, p}(\Omega; w)$.
\end{remark}

\begin{proof}[Proof of Theorem \ref{thm:singular_fe}]
Assume that \eqref{eqn:trace_ineq} holds.
Let $h(u) = u^{q - 1}$, and let $\{ u_{k} \}_{k = 1}^{\infty}$ be the sequence of lsc-regularized weak solutions to \eqref{eqn:approximate}.
Then
\[
\begin{split}
\alpha \int_{\Omega} |\nabla u_{k}|^{p} \, d w
& \le
\int_{\Omega} \mathcal{A}(x, \nabla u_{k}) \cdot \nabla u_{k} \, dx
=
\int_{\Omega} u_{k} \left( u_{k} + \frac{1}{k} \right)^{q - 1}  \, d \sigma_{k}
\\
& \le
\int_{\Omega} u_{k}^{q} \, d \sigma
\le
C_{1}^{q} \left( \int_{\Omega} |\nabla u_{k}|^{p} \, d w \right)^{ \frac{q}{p} }.
\end{split}
\]
Therefore, $u(x) = \lim_{k \to \infty} u_{k}(x)$ belongs to $H_{0}^{1, p}(\Omega; w)$, and
\[
\| \nabla u \|_{L^{p}(\Omega; w)}
\le
\liminf_{k \to \infty}
\| \nabla u_{k} \|_{L^{p}(\Omega; w)}
\le
\alpha^{\frac{-1}{p - q}} C_{1}^{ \frac{q}{p - q} }.
\]
Meanwhile, by the argument in the proof of Theorem \ref{thm:singular},
$u$ satisfies \eqref{eqn:singular} in the sense of weak solutions.
The uniqueness follows from Remark \ref{rem:uniqueness}.

Conversely, assume the existence of $u$. By \eqref{eqn:embedding_sing} and H\"{o}lder's inequality,
\[
\begin{split}
\int_{\Omega} \varphi^{q} \, d \sigma
& =
\int_{\Omega} u^{q (1 - q)} \left( \varphi u^{q - 1} \right)^{q} \, d \sigma 
\\
& \le
\left( \int_{\Omega} u^{q} \, d \sigma \right)^{1 - q} 
\left( \int_{\Omega} \varphi u^{q - 1} \, d \sigma \right)^{q}
\\
& \le
\beta \left( \int_{\Omega} |\nabla u|^{p} \, d w \right)^{\frac{p - q}{p}}
\left( \int_{\Omega} |\nabla \varphi|^{p} \, d w \right)^{\frac{q}{p}},
\quad \forall \varphi \in C_{c}^{\infty}(\Omega), \ \varphi \ge 0.
\end{split}
\]
Therefore, \eqref{eqn:trace_ineq} holds with $C_{1} \le \beta^{\frac{1}{q}} \| \nabla u \|_{L^{p}(\Omega; w)}^{\frac{p - q}{q}}$.
\end{proof}

\begin{proof}[Proof of Theorem \ref{thm:bdd_sol}]
Assume the existence of $v$.
By Theorem \ref{thm:singular} and \cite[Theorem 7.25]{MR2305115}, there exists
a bounded weak solution $u \in H^{1, p}_{\loc}(\Omega; w) \cap L^{\infty}(\Omega)$ to \eqref{eqn:singular}.
By the weak Harnack inequality, $h(u)$ is locally bounded in $\Omega$.
Fix $x_{0} \in \Omega$.
By \cite[Theorem 4.20]{MR1264000} (see also \cite[Corollary 3.17]{MR1386213} for weighted equations), 
$u$ is continuous at $x_{0}$ if and only if
\[
\lim_{R \to 0} \sup_{x \in B(x_{0}, R)} \W_{p, w}^{R} (h(u) \sigma)(x) = 0.
\]
On the other hand, since $v$ is continuous at $x_{0}$, by the same reason,
\[
\lim_{R \to 0} \sup_{x \in B(x_{0}, R)} \W_{p, w}^{R} \sigma(x) = 0.
\]
Consequently, $u$ is continuous in $\Omega$.
The boundary continuity of $u$ follows from \eqref{eqn:bound_sing}.

Let $u, v \in H^{1, p}_{\loc}(\Omega; w) \cap C(\cl{\Omega})$ be continuous weak solutions to \eqref{eqn:singular}.
Assume that there exists $x \in \Omega$ such that $u(x) > v(x)$.
Then the open set $D = \{ x \in \Omega \colon u(x) > v(x) + \epsilon \}$ is not empty for $\epsilon > 0$ small.
Recall that $u$ vanishes on $\partial \Omega$ continuously and that $v \ge 0$ in $D$.
Thus $\cl{D} \Subset \Omega$ and $u, v \in H^{1, p}(D; w)$.
Using the test function $(u - v - \epsilon) \in H_{0}^{1, p}(D; w)$, we get
\[
\begin{split}
&
\int_{D} \left( \A(x, \nabla u) - \A(x, \nabla v) \right)  \cdot \nabla (u - v) \, dx
\\
& =
\int_{D} \left( \A(x, \nabla u) - \A(x, \nabla v) \right)  \cdot \nabla (u - v - \epsilon) \, dx
\\
& =
\int_{D} (u - v - \epsilon) \left( h(u) -h(v) \right) \, d \sigma \le 0.
\end{split}
\]
Thus, $\nabla u = \nabla v$ a.e. in $D$ and $(u - v - \epsilon) = 0$ in $D$. This contradicts the assumption.

Conversely, assume the existence of $u$. 
By Theorem \ref{thm:comparison_principle},
\[
\begin{split}
u(x)
& \ge
\w_{\A}\left( h(u) \sigma \right)(x) 
\\
& \ge
\w_{\A}\left( h(\sup_{\Omega} u) \sigma \right)(x)
=
h(\sup_{\Omega} u)^{\frac{1}{p - 1}} \w_{\A} \sigma(x), \quad \forall x \in \Omega.
\end{split}
\]
Therefore, $v = \w_{\A} \sigma$ is a bounded weak solution to \eqref{eqn:p-laplace}, and
\begin{equation}\label{eqn:bound_for_sing_eq}
\begin{split}
v(x) \le \frac{u(x)}{ h(\sup_{\Omega} u)^{\frac{1}{p - 1}} },
\quad \forall x \in \Omega.
\end{split}
\end{equation}
The interior regularity of $v$ follows from the same argument as above.
\end{proof}

\section*{Acknowledgments}
The author would like to thank Professor Verbitsky for providing useful information on the contents of  \cite{MR3881877}.
This work was supported by JSPS KAKENHI Grant Number JP18J00965 and JP17H01092.


\bibliographystyle{abbrv}
\bibliography{reference}


\end{document}